\documentclass[11pt]{article}

\usepackage[T1]{fontenc}
\usepackage[utf8]{inputenc}
\usepackage{lmodern}
\usepackage{amsmath,amssymb,amsthm,mathtools}
\usepackage{stmaryrd}
\usepackage{enumitem}
\usepackage{microtype}
\usepackage[a4paper,margin=1.05in]{geometry}
\usepackage[colorlinks=true,linkcolor=blue,citecolor=blue,urlcolor=blue]{hyperref}

\newtheorem{theorem}{Theorem}[section]
\newtheorem{proposition}[theorem]{Proposition}
\newtheorem{lemma}[theorem]{Lemma}
\newtheorem{corollary}[theorem]{Corollary}

\newtheorem{example}[theorem]{Example}
\newtheorem{definition}[theorem]{Definition}
\newtheorem{question}[theorem]{Question}

\newcommand{\Fp}{\mathbb F_p}
\newcommand{\Zp}{\mathbb Z_p}
\newcommand{\cdp}{\operatorname{cd}}
\newcommand{\Homc}{\operatorname{Hom}_{\mathrm{cont}}}
\newcommand{\Tor}{\operatorname{Tor}}
\newcommand{\coker}{\operatorname{coker}}
\newcommand{\tdim}{\operatorname{tdim}}

\newcommand{\kappaGN}{\kappa(G,N)}

\title{Detecting Cohomological Dimension Three in Free-by-Demu\v{s}kin Pro-$p$ Groups}

\author{M. Palaisti}
\date{ }

\begin{document}
	
	\maketitle
	
	\begin{abstract}
Let
\[1\longrightarrow N\longrightarrow G\longrightarrow D\longrightarrow1\]
be exact, with $N\neq1$ free pro-$p$ of arbitrary rank and $D$ Demu\v{s}kin.
Writing $W=N/\Phi(N)$, we prove
\[H^3(G,\Fp)^\vee\simeq W^D,\qquad \cdp G=3\text{ if and only if }W^D\neq0.\]
Thus dimension three is detected on the first Frattini layer of the free
kernel. Finite-rank kernels always satisfy the criterion, so a dimension drop
requires an infinite-rank kernel, while the criterion remains stable under
passage to open subgroups. On the fixed-point-free locus $W^D=0$, the
remaining dimension-one/two alternative is governed by
$H^1(D,W^\vee)$ together with the Hochschild--Serre transgression. Dualizing
the latter produces a relation-defect class in the topological coinvariants
of $W$, which admits an explicit description in terms of a lifted defining
relation of the Demu\v{s}kin quotient. This yields a complete criterion for
the lower-dimensional branch and separates the top-degree fixed-point
obstruction from the extension data controlling dimensions one and two.
	\end{abstract}
	
	\medskip
	
\noindent\textbf{Keywords.}
pro-$p$ groups; Demu\v{s}kin groups; free pro-$p$ groups;
cohomological dimension; Frattini quotient.

\noindent\textbf{2020 Mathematics Subject Classification.}
Primary 20E18; Secondary 20J06, 20E22.
	
\section{Introduction}
	
Let
\begin{equation}\label{eq:mainext}
1\longrightarrow N\longrightarrow G \overset{\pi}{\longrightarrow}D\longrightarrow1
\end{equation}
be an exact sequence of pro-$p$ groups. When $N$ is free pro-$p$ and $D$ is Demu\v{s}kin,
\[\cdp N=1,\qquad\cdp D=2,\]
and the standard extension inequality gives $\cdp G\le3$. We identify the kernel--quotient structure that determines whether this upper bound is attained.
	
Assume that $N\neq1$ is free pro-$p$, with no finiteness hypothesis on its rank, and put
\[W=N/\Phi(N).\]
Conjugation in $G$ induces a continuous $D$-action on the compact elementary abelian pro-$p$ group $W$, and Pontryagin duality identifies
\[W^\vee=\Homc(W,\Fp)=H^1(N,\Fp).\]
	
The main identity is
\begin{equation}
H^3(G,\Fp)^\vee\simeq \bigl(N/\Phi(N)\bigr)^D.
\end{equation}
Consequently,
\begin{equation}
\cdp G=3 \text{ if and only if } \bigl(N/\Phi(N)\bigr)^D\neq0.\end{equation}
Equivalently, dimension three occurs precisely when there exists
\[n\in N\setminus\Phi(N)\]
such that
\[[g,n]\in\Phi(N)\qquad \text{for every }g\in G.\]
	
The proof comes from the top edge of the Hochschild--Serre spectral sequence. Since $N$ has cohomological dimension one and $D$ has cohomological dimension two, the unique possible contribution to total degree three is $H^2\bigl(D,H^1(N,\Fp)\bigr)$. No differential enters or leaves this term. Top-degree Demu\v{s}kin duality then identifies its Pontryagin dual with $W^D$.
	
The fixed-Frattini criterion extends the finite-type dimension-additivity picture to arbitrary-rank free kernels. The duality-extension theorem of Schmidt--Wingberg \cite{SchmidtWingberg} gives dimension additivity when the kernel satisfies their finiteness condition $FC(N,p)$. For finitely generated free kernels, the criterion above recovers this conclusion directly from the induced action on $N/\Phi(N)$. For arbitrary-rank kernels, it continues to identify the top-degree obstruction even when the finite-type hypothesis is absent.
	
This distinction is realized by the examples of Kochloukova--Zalesskii \cite{KZ}. They construct a finitely generated free-by-Demu\v{s}kin pro-$p$ group of cohomological dimension two whose free kernel has infinite rank. The fixed-Frattini criterion gives
\[(N/\Phi(N))^D=0\]
for such an extension, thereby identifying the first-Frattini-layer mechanism underlying the dimension drop.
	
An infinite-rank kernel is therefore necessary, but not sufficient, for dimension drop. Indeed, if
\[G=N\times D,\]
then the induced action on $N/\Phi(N)$ is trivial and dimension three persists for free kernels of arbitrary nonzero rank. The relevant distinction is the presence or absence of a quotient-fixed direction in the first Frattini module.
	
When the fixed-point module is infinite, its ordinary Hamel dimension is not preserved by Pontryagin duality. For a compact elementary abelian pro-$p$ group $X$, write 
\[\tdim_{\Fp}X:=\dim_{\Fp}X^\vee,\]
and call this the \textit{topological $\Fp$-rank of $X$}. To measure the size of the fixed submodule of the first Frattini quotient, define the \textit{kernel fixed-point rank} by
\[\kappa(G,N) := \tdim_{\Fp} \bigl(N/\Phi(N)\bigr)^D.\]
Thus $\kappa(G,N)$ records the topological $\Fp$-rank of the $D$-fixed part of $N/\Phi(N)$. It moreover satisfies
\[\kappa(G,N)=\dim_{\Fp}H^3(G,\Fp)\]
for free kernels of arbitrary rank.
	
The lower-dimensional branch exhibits a complementary role for coinvariants. The top degree is controlled by $W^D$, whereas the Hochschild--Serre transgression is naturally dual to the topological coinvariant module
\[W_D= \displaystyle W/\overline{I_DW},\]
with $I_D$ the augmentation ideal of $\Fp\llbracket D\rrbracket$. On the fixed-point-free locus $W^D=0$, a relation-defect class $\delta_G\in W_D$ together with $H^1(D,W^\vee)$ determines whether the remaining cohomological dimension is one or two. Thus invariants govern the top-dimensional branch, while coinvariants enter through the transgression in the lower-dimensional branch.
	
The finite-rank dimension-three case also admits a duality refinement. Schmidt--Wingberg's theorem makes $G$ a duality group at $p$ of dimension 	three. Poincar\'e duality is more rigid. The normal-subgroup theorem of 	Grunewald--Jaikin-Zapirain--Pinto--Zalesskii \cite{GJPPZ} shows that, when $G$ is a pro-$p$ $PD^3$ group, a finitely generated free normal subgroup with Demu\v{s}kin quotient must be cyclic.
	
\section{Preliminaries}\label{sec:prelim}

We briefly recall the conventions and background used throughout the paper. All cohomology is continuous, and all module actions are understood to be continuous. Pontryagin duals of elementary abelian $p$-groups are taken with respect to $\Fp$. For general background on profinite and pro-$p$ groups and their cohomology, we refer to Serre \cite{Serre}, Ribes--Zalesskii \cite{RZ}, and Neukirch--Schmidt--Wingberg \cite{NSW}. For pseudocompact modules and completed group algebras, we use the formalism of Brumer \cite{Brumer}.
	
\subsection{Cohomological dimension}
	
For a pro-$p$ group $P$, the \textit{cohomological dimension} $\cdp P$ is the least integer $n$ such that
\[H^q(P,A)=0\]
for every discrete $p$-primary $P$-module $A$ and every $q>n$.
	
We use the standard criterion
\begin{equation}\label{eq:cdcriterion}
\cdp P\le n \text{ if and only if } H^{n+1}(P,\Fp)=0
\end{equation}
for pro-$p$ groups; see Serre \cite{Serre} or Neukirch--Schmidt--Wingberg \cite{NSW}.
	
A nontrivial pro-$p$ group has cohomological dimension one if and only if it is free pro-$p$. If
\[1\longrightarrow H\longrightarrow P\longrightarrow Q\longrightarrow1\]
is exact and both dimensions on the right are finite, the Hochschild--Serre spectral sequence gives
\begin{equation}\label{eq:upperbound}
\cdp P\le \cdp H+\cdp Q.
\end{equation}
	
\subsection{Demu\v{s}kin duality}
	
A \textit{Demu\v{s}kin group} $D$ is a finitely generated pro-$p$ group of cohomological dimension two such that
\[H^2(D,\Fp)\simeq\Fp\]
and the cup-product pairing on $H^1(D,\Fp)$ is nondegenerate. Equivalently, Demu\v{s}kin groups are the pro-$p$ Poincar\'e-duality groups of dimension two; see Labute \cite{Labute}, Serre \cite{Serre}, and \cite[Chapter~III]{NSW}.
	
The reduction modulo $p$ of the orientation character is trivial. Indeed, a continuous homomorphism from the pro-$p$ group $D$ to $\Fp^\times$ has trivial image.
	
\begin{lemma}\label{lem:topdual}
Let $D$ be Demu\v{s}kin and let $M$ be a discrete $\Fp[D]$-module. Then there is a natural isomorphism
\[H^2(D,M)^\vee \simeq \operatorname{Hom}_D(M,\Fp).\]
If $M=X^\vee$ for a compact $\Fp\llbracket D\rrbracket$-module $X$, then
\begin{equation}\label{eq:topdualfixed}
H^2(D,X^\vee)^\vee \simeq X^D.
\end{equation}\end{lemma}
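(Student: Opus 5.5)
We prove the first isomorphism for finite modules first, pass to all discrete $M$ by a limit argument, and then read off \eqref{eq:topdualfixed} from Pontryagin duality.

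\textbf{Step 1: finite modules.} Let $M$ be a finite $\Fp[D]$-module. Since $D$ is a pro-$p$ Poincar\'e duality group of dimension two, the cup product
\[H^2(D,M)\times H^0(D,\operatorname{Hom}(M,\Fp))\longrightarrow H^2(D,\Fp)\simeq\Fp\]
is a perfect pairing. Here $\operatorname{Hom}(M,\Fp)$ should a priori be twisted by the orientation character $\chi$. But $\chi$ is read modulo $p$, since the target is $\Fp$-linear, and as noted above its reduction $D\to\Fp^\times$ is trivial. So no twist appears, and
\[H^2(D,M)^\vee\simeq \operatorname{Hom}(M,\Fp)^D=\operatorname{Hom}_D(M,\Fp).\]
This isomorphism is natural in $M$, because it comes from cup product with a fixed fundamental class. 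For this step we would cite \cite[Chapter~III]{NSW} or Serre \cite{Serre}.

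\textbf{Step 2: arbitrary discrete modules.} An arbitrary discrete $\Fp[D]$-module is the filtered union $M=\varinjlim M_i$ of its finite $D$-submodules. Continuous cohomology of profinite groups commutes with filtered direct limits, so
\[H^2(D,M)=\varinjlim H^2(D,M_i).\]
Pontryagin duality turns this into
\[H^2(D,M)^\vee=\varprojlim H^2(D,M_i)^\vee\simeq\varprojlim \operatorname{Hom}_D(M_i,\Fp)=\operatorname{Hom}_D(M,\Fp).\]
The main thing to check here is that the finite-level isomorphisms are compatible with the transition maps. This holds because each one is cup product with the same fundamental class in $H^2(D,\Fp)$.

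\textbf{Step 3: the compact case.} Take $M=X^\vee$ with $X$ a compact $\Fp\llbracket D\rrbracket$-module. By Pontryagin duality, continuous homomorphisms $X^\vee\to\Fp$ correspond to elements of $X^{\vee\vee}=X$. Under this correspondence the $D$-equivariant ones are exactly the elements of $X$ fixed by $D$, since duality exchanges the two actions. Hence
\[\operatorname{Hom}_D(X^\vee,\Fp)\simeq X^D,\]
which gives \eqref{eq:topdualfixed}.

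I expect Step 1 to be the main obstacle: stating precisely which form of Poincar\'e duality is used, and confirming that the orientation twist disappears for $\Fp$-modules. The limit argument and the final identification are routine.
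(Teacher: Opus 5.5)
Your proposal is correct and follows essentially the same route as the paper: top-degree duality for finite modules with the mod-$p$ orientation twist trivial, extension to arbitrary discrete $M$ via filtered colimits and Pontryagin duality, and biduality to identify $\operatorname{Hom}_D(X^\vee,\Fp)$ with $X^D$. The only difference is cosmetic: you phrase the finite case through the perfect cup-product pairing into $H^2(D,\Fp)$, whereas the paper uses the equivalent dualizing-module identity $H^2(D,M)^\vee\simeq\operatorname{Hom}_D(M,I(D)[p])$ with $I(D)[p]\simeq\Fp$.
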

	
\begin{proof}
For a finite discrete $M$, top-degree duality is the usual dualizing-module identity for a profinite duality group; see \cite[p.~1]{SchmidtWingberg}. Since $M$ is killed by $p$, the relevant homomorphisms into the dualizing module $I(D)$ take values in $I(D)[p]$. For a Demu\v{s}kin group this is a one-dimensional $\Fp$-space carrying the mod-$p$ orientation action. That action is trivial, and hence 
\[I(D)[p]\simeq\Fp\]
as a $D$-module. Therefore
\[H^2(D,M)^\vee\simeq \operatorname{Hom}_D(M,\Fp)\]
for finite $M$.
		
Now let $M$ be arbitrary and discrete. Every element of $M$ has open stabilizer and hence finite orbit, so $M$ is the filtered union of its finite-dimensional $D$-submodules:
\[M=\varinjlim_\lambda M_\lambda.\]
Continuous cohomology commutes with filtered direct limits of discrete coefficient modules; see \cite[Proposition~1.5.1]{NSW}. Thus
\[H^2(D,M)\simeq \varinjlim_\lambda H^2(D,M_\lambda).\]
Taking Pontryagin duals gives
\[H^2(D,M)^\vee\simeq \varprojlim_\lambda H^2(D,M_\lambda)^\vee \simeq \varprojlim_\lambda \operatorname{Hom}_D(M_\lambda,\Fp) \simeq \operatorname{Hom}_D(M,\Fp).\]
		
If $M=X^\vee$, Pontryagin biduality identifies $\operatorname{Hom}_D(X^\vee,\Fp)$ with the $D$-fixed subgroup of $X^{\vee\vee}\simeq X$, which proves \eqref{eq:topdualfixed}.
\end{proof}
	
\subsection{The Frattini module}

For a pro-$p$ group $N$, write
\[\Phi(N)=\overline{N^p[N,N]}\]
for its \textit{Frattini subgroup}. If $N$ is free pro-$p$, then
\[W=N/\Phi(N)\]
is a compact elementary abelian pro-$p$ group. We call $W$ the
\textit{Frattini module} of $N$. Its Pontryagin dual is naturally identified with
\[ H^1(N,\Fp)=W^\vee.\]
	
If $N\trianglelefteq G$ and $D=G/N$, conjugation induces a continuous $D$-action on $W$. Thus $W$ is a compact $\Fp\llbracket D\rrbracket$-module.
	
We use both the \textit{invariant module}
\[W^D= \{w\in W:dw=w\text{ for every }d\in D\}\]
and the \textit{topological coinvariant module}
\begin{equation}
W_D:=W/\overline{I_DW},
\end{equation}
where $I_D$ is the augmentation ideal of $\Fp\llbracket D\rrbracket$.
	
Pontryagin duality gives
\begin{equation}\label{eq:invcoinvdual}
(W_D)^\vee\simeq (W^\vee)^D.
\end{equation}
	
\begin{definition}\label{def:tdim}
For a compact elementary abelian pro-$p$ group $X$, define the topological $\mathbb{F}_p$-rank on $X$ as
\[\tdim_{\Fp}X:=\dim_{\Fp}X^\vee.\]
\end{definition}
	
When $X$ is finite, this agrees with its ordinary $\Fp$-dimension.

\section{Top-degree cohomology}\label{sec:top}

The purpose of this section is to isolate the top cohomological degree of a free-by-Demu\v{s}kin extension. The cohomological dimensions of the kernel and quotient force the Hochschild--Serre spectral sequence into a particularly simple range, so that the degree-three cohomology of $G$ can be read directly from a single term. Demu\v{s}kin duality then translates this term into a fixed-point condition on the first Frattini module of $N$.

Let \eqref{eq:mainext} be exact and put
\[V=H^1(N,\Fp)=W^\vee.\]
The Hochschild--Serre spectral sequence (see \cite[Theorem~2.4.3]{NSW} for the profinite setting) is
\begin{equation}
E_2^{i,j}=H^i\bigl(D,H^j(N,\Fp)\bigr)\Longrightarrow H^{i+j}(G,\Fp).
\end{equation}
	
Since $N$ is free,
\[H^j(N,\Fp)=0 \qquad (j\ge2),\]
and since $D$ is Demu\v{s}kin,
\[H^i(D,-)=0\qquad (i\ge3).\]
Thus the $E_2$-page has only two possibly nonzero rows and three possibly nonzero columns.

This restricted support separates the top-degree calculation from the
extension data that can affect lower degrees. In total degree three there is
only one possible contribution, while the transgression appears entirely in
the lower part of the spectral sequence.
	
The only potentially nonzero differential is the transgression
\begin{equation}\label{eq:transgression}
d_2^{0,1}: H^0(D,V)=V^D \longrightarrow	H^2(D,\Fp).
\end{equation}
It is the transgression appearing in the five-term sequence; see \cite[Proposition~1.6.7 and Theorem~2.4.3]{NSW}.

We first record the resulting top-edge identification. This is the point at which the cohomological-dimension assumptions on $N$ and $D$ do most of the work: no extension differential can alter the unique term of total degree three.
	
\begin{proposition}\label{prop:topedge}
There is a natural isomorphism
\begin{equation}\label{eq:edge}
H^3(G,\Fp) \simeq H^2(D,V).
\end{equation}
Moreover,
\[H^i(G,\Fp)=0 \qquad (i>3).\]
\end{proposition}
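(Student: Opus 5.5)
We will read both claims off the Hochschild--Serre spectral sequence
\[E_2^{i,j}=H^i\bigl(D,H^j(N,\Fp)\bigr)\Longrightarrow H^{i+j}(G,\Fp),\]
using only the support restrictions recorded above. Since $\cdp N=1$ (as $N\neq1$ is free), $E_2^{i,j}=0$ for $j\ge2$. Since $\cdp D=2$, $E_2^{i,j}=0$ for $i\ge3$. Here we use that $H^j(N,\Fp)$ is a discrete $p$-primary $D$-module, so the vanishing applies to every coefficient module appearing. Hence $E_2^{i,j}$ is supported in $0\le i\le2$, $0\le j\le1$.

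For the vanishing statement, every term with $i+j>3$ lies outside this rectangle, because $i+j\le 3$ there. So $E_\infty^{i,j}=0$ whenever $i+j>3$. Since the filtration on $H^n(G,\Fp)$ is finite, with graded pieces $E_\infty^{i,n-i}$, we get $H^n(G,\Fp)=0$ for $n>3$. Alternatively, this follows directly from \eqref{eq:upperbound} together with \eqref{eq:cdcriterion}.

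For total degree three, the only lattice point in the support is $(i,j)=(2,1)$, giving $E_2^{2,1}=H^2(D,V)$. We check that no differential touches it:
\begin{itemize}
\item The differential $d_r\colon E_r^{2,1}\to E_r^{2+r,2-r}$ lands in column $2+r\ge4$, which is zero.
\item A differential into $E_r^{2,1}$ comes from $E_r^{2-r,r}$. For $r\ge2$ this has row $r\ge2$, which is zero.
\end{itemize}
So $E_\infty^{2,1}=E_2^{2,1}$. All other graded pieces of $H^3(G,\Fp)$, namely $E_\infty^{3-j,j}$ for $j\ne1$, vanish. The filtration on $H^3(G,\Fp)$ therefore has a single nonzero quotient, and the edge map gives $H^3(G,\Fp)\simeq E_\infty^{2,1}=H^2(D,V)$.

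For naturality, note that this isomorphism is the composite of the spectral-sequence identifications. It is natural with respect to morphisms of extensions, by functoriality of the Hochschild--Serre spectral sequence as in \cite[Theorem~2.4.3]{NSW}.

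The only point requiring care, and thus the expected main obstacle, is convergence and the finiteness of the filtration in the profinite setting with the infinite-rank module $V$. No finiteness assumption on $V$ is needed, though. Convergence of the Hochschild--Serre spectral sequence holds for arbitrary discrete coefficients. The filtration on $H^n(G,\Fp)$ has length at most $n+1$, so the identification $E_\infty=E_2$ in position $(2,1)$ suffices.
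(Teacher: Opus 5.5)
Your proposal is correct and is essentially the paper's argument. You confine the Hochschild--Serre $E_2$-page to $0\le i\le 2$, $0\le j\le 1$, check that no $d_r$ enters or leaves $E_2^{2,1}=H^2(D,V)$, and read off both $H^3(G,\Fp)\simeq H^2(D,V)$ and the vanishing above degree three from the finite filtration; your remarks on naturality and on convergence with infinite-rank $V$ are points the paper leaves implicit.
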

	
\begin{proof}
The unique $E_2$-term of total degree three is
\[E_2^{2,1}=H^2(D,V).\]
		
For $r\ge2$, a differential entering this term would have source $E_r^{2-r,r}$. Such a source either lies outside the first quadrant or has second index at least two, where the spectral sequence vanishes.
		
A differential leaving $E_r^{2,1}$ has target $E_r^{2+r,2-r}$. For $r=2$, the target lies in column four and vanishes because $\cdp D=2$; for $r>2$, the target lies outside the nonzero rows. Hence
\[E_\infty^{2,1}=E_2^{2,1}.\]
This is the only graded piece of the filtration of $H^3(G,\Fp)$, proving
\eqref{eq:edge}. The same support argument gives
\[H^i(G,\Fp)=0\qquad(i>3).\]
\end{proof}
	
The preceding proposition reduces the problem to the top cohomology of the Demu\v{s}kin quotient with coefficients in $V=W^\vee$. Applying Demu\v{s}kin duality now converts this cohomological description into a module-theoretic one.

\begin{theorem}\label{thm:topfixed}
For every extension \eqref{eq:mainext},
\begin{equation}
H^3(G,\Fp)^\vee\simeq W^D =\bigl(N/\Phi(N)\bigr)^D.
\end{equation}
Consequently,
\begin{equation}\label{eq:topdimension}
\dim_{\Fp}H^3(G,\Fp)=\tdim_{\Fp} \bigl(N/\Phi(N)\bigr)^D.
\end{equation}
\end{theorem}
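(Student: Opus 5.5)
The plan is to combine Proposition~\ref{prop:topedge} with Lemma~\ref{lem:topdual}. By Proposition~\ref{prop:topedge} we have a natural isomorphism $H^3(G,\Fp)\simeq H^2(D,V)$ with $V=H^1(N,\Fp)=W^\vee$. Taking Pontryagin duals of both sides, which is legitimate since both are discrete $p$-torsion abelian groups, gives
\[H^3(G,\Fp)^\vee\simeq H^2(D,V)^\vee = H^2(D,W^\vee)^\vee .\]

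We then apply \eqref{eq:topdualfixed} with $X=W$. This requires $W$ to be a compact $\Fp\llbracket D\rrbracket$-module and $V=W^\vee$ to be a discrete $\Fp[D]$-module. Both facts were noted in the preliminaries: conjugation in $G$ induces a continuous $D$-action on $W$, because $N$ acts trivially on its own abelianized Frattini quotient. The identification $H^1(N,\Fp)=W^\vee$ is $D$-equivariant, since the conjugation action on cohomology is dual to the conjugation action on $N/\Phi(N)$. The point I would check most carefully is this compatibility of actions: the $D$-module structure on $E_2^{2,1}$ is the one from conjugation on $H^1(N,\Fp)$, and it must match the contragredient of the action on $W$. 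Inverse-versus-direct conventions do not affect fixed points, so $W^D$ is unchanged either way. Lemma~\ref{lem:topdual} then yields $H^2(D,W^\vee)^\vee\simeq W^D$, which proves the first claim.

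For \eqref{eq:topdimension}, we dualize once more. $W^D$ is a closed subgroup of $W$, hence a compact elementary abelian pro-$p$ group. By biduality,
\[\bigl(W^D\bigr)^\vee\simeq H^3(G,\Fp)^{\vee\vee}\simeq H^3(G,\Fp),\]
so
\[\dim_{\Fp}H^3(G,\Fp)=\dim_{\Fp}(W^D)^\vee=\tdim_{\Fp}W^D\]
by Definition~\ref{def:tdim}.

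I expect no serious obstacle, since all the substantive input is already in Proposition~\ref{prop:topedge} and Lemma~\ref{lem:topdual}. The only subtle step is the equivariance check of the second paragraph. For infinite-rank $N$, $V$ is an infinite discrete module, and this case is covered by the direct-limit part of Lemma~\ref{lem:topdual}.
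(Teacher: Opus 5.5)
Your proposal is correct and matches the paper's proof: Proposition~\ref{prop:topedge} gives $H^3(G,\Fp)\simeq H^2(D,W^\vee)$, Lemma~\ref{lem:topdual} identifies its dual with $W^D$, and \eqref{eq:topdimension} follows from Definition~\ref{def:tdim}. Your equivariance check on $H^1(N,\Fp)\simeq W^\vee$ and your biduality step for the dimension formula fill in details the paper leaves implicit.
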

	
\begin{proof}
Proposition~\ref{prop:topedge} gives
\[H^3(G,\Fp)\simeq H^2(D,W^\vee).\]
Lemma~\ref{lem:topdual} therefore yields
\[H^3(G,\Fp)^\vee\simeq H^2(D,W^\vee)^\vee \simeq W^D. \]
Equation \eqref{eq:topdimension} follows from Definition~\ref{def:tdim}.
	\end{proof}
	
Thus top-degree nonvanishing depends on the induced $D$-action on the first Frattini quotient $N/\Phi(N)$. Once this action is fixed, the transgression \eqref{eq:transgression} affects the lower cohomology but not $H^3(G,\Fp)$.

\section{Detecting cohomological dimension three}\label{sec:criterion}

The top-degree calculation yields an explicit criterion for whether the extension attains the maximal possible cohomological dimension. We now collect its equivalent cohomological, module-theoretic, and group-theoretic forms. In particular, the condition can be detected entirely on the first Frattini quotient of the free kernel.
	
\begin{theorem}\label{thm:equiv}
Let
\[1\longrightarrow N\longrightarrow G\longrightarrow D\longrightarrow1\]
be exact, with $N\neq1$ free pro-$p$ and $D$ Demu\v{s}kin. Then the following are equivalent:
\begin{enumerate}[label=\textup{(\roman*)}]
\item $\cdp G=3$;
\item $H^3(G,\Fp)\neq0$;
\item $H^2(D,H^1(N,\Fp))\neq0$;
\item $(N/\Phi(N))^D\neq0$;
\item there exists $n\in N\setminus\Phi(N)$ such that $[g,n]\in\Phi(N) \text{ for every }g\in G$.
\end{enumerate}\end{theorem}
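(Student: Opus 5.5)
The plan is to chain the equivalences through the results already established, proving (i)$\Leftrightarrow$(ii)$\Leftrightarrow$(iii)$\Leftrightarrow$(iv)$\Leftrightarrow$(v) in turn.

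\emph{Step 1: (i)$\Leftrightarrow$(ii).} By \eqref{eq:upperbound}, with $\cdp N=1$ and $\cdp D=2$, we have $\cdp G\le 3$; Proposition~\ref{prop:topedge} also gives $H^i(G,\Fp)=0$ for $i>3$. Applying the criterion \eqref{eq:cdcriterion} with $n=2$ gives $\cdp G\le2$ if and only if $H^3(G,\Fp)=0$. Hence $\cdp G=3$ exactly when $H^3(G,\Fp)\ne0$.

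\emph{Step 2: (ii)$\Leftrightarrow$(iii)$\Leftrightarrow$(iv).} The equivalence (ii)$\Leftrightarrow$(iii) is the isomorphism $H^3(G,\Fp)\simeq H^2(D,V)$ of Proposition~\ref{prop:topedge}, with $V=H^1(N,\Fp)$. For (iii)$\Leftrightarrow$(iv), Theorem~\ref{thm:topfixed} gives $H^3(G,\Fp)^\vee\simeq W^D$. Pontryagin duality of discrete $\Fp$-spaces is faithful, since a nonzero space has a nonzero functional, so $H^2(D,V)\neq0$ if and only if $W^D\neq0$.

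\emph{Step 3: (iv)$\Leftrightarrow$(v).} This step translates the module condition into group elements. The $D$-action on $W$ is induced by conjugation: for $g\in G$ and $n\in N$, the element $\pi(g)$ sends $n\Phi(N)$ to $gng^{-1}\Phi(N)$. So $w=n\Phi(N)$ is $D$-fixed if and only if $gng^{-1}n^{-1}\in\Phi(N)$ for all $g\in G$. This is the commutator condition up to the convention for $[g,n]$. Either convention works, because $\Phi(N)$ is normal in $G$ (it is characteristic in $N$) and is stable under inversion. Finally, $w\ne0$ is equivalent to $n\notin\Phi(N)$. Every class of $W$ is represented by some $n\in N$, which gives the direction (iv)$\Rightarrow$(v); the converse is immediate.

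\emph{Main obstacle.} The only delicate point is Step 1, where we need $\cdp G$ to be exactly $3$ rather than merely at least $3$. This is handled by the upper bound \eqref{eq:upperbound} together with \eqref{eq:cdcriterion}. We also need the hypothesis $N\ne1$ to be used somewhere. Strictly, the chain does not require it, since $N=1$ gives $W=0$ and then all five conditions fail together. I would remark on this rather than rely on it.
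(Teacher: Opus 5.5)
Your proposal is correct and follows essentially the same chain as the paper: the upper bound together with the $H^{n+1}$ criterion for (i)$\Leftrightarrow$(ii), Proposition~\ref{prop:topedge} and Theorem~\ref{thm:topfixed} for (ii)$\Leftrightarrow$(iii)$\Leftrightarrow$(iv), and unwinding the conjugation action on $N/\Phi(N)$ for (iv)$\Leftrightarrow$(v). Your remarks on the commutator convention, which uses normality of $\Phi(N)$, and on the case $N=1$, where all five conditions fail, are accurate points that the paper leaves implicit.
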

	
\begin{proof}
By \eqref{eq:upperbound}, $\cdp G\le3$. Thus (i) and (ii) are equivalent by \eqref{eq:cdcriterion}. Proposition~\ref{prop:topedge} gives (ii) $\Longleftrightarrow$ (iii), and Theorem~\ref{thm:topfixed} gives (iii) $\Longleftrightarrow$ (iv). Finally, $n\Phi(N)$ is fixed by $D=G/N$ if and only if 
\[gng^{-1}n^{-1}\in\Phi(N)\qquad \text{for every }g\in G,\]
which is equivalent to
\[[g,n]\in\Phi(N).\]
The fixed class is nonzero precisely when $n\notin\Phi(N)$.
\end{proof}

The criterion records only whether the fixed submodule is nonzero. It is also useful to retain its size, which measures the dimension of the top cohomology of $G$.
	
\begin{definition}\label{def:kappa}
Let
\[ 1\longrightarrow N\longrightarrow G\longrightarrow D\longrightarrow1\]
be an exact sequence of pro-$p$ groups, where $N\neq1$ is free pro-$p$ and $D$ is Demu\v{s}kin. Writing 
\[W=N/\Phi(N),\]
define the kernel fixed-point rank of the extension by
\[\kappa(G,N):= \tdim_{\Fp} W^D =\tdim_{\Fp} \bigl(N/\Phi(N)\bigr)^D.\]
\end{definition}
	
\begin{corollary}\label{cor:kappa}
For every free-by-Demu\v{s}kin extension,
\[\kappa(G,N)=\dim_{\Fp}H^3(G,\Fp).	\]
In particular, $\cdp G=3 \text{ if and only if } \kappaGN>0$. \end{corollary}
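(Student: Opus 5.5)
The corollary follows by combining Theorem~\ref{thm:topfixed} with Definition~\ref{def:kappa} and Theorem~\ref{thm:equiv}. First, Theorem~\ref{thm:topfixed} gives an isomorphism of discrete $\Fp$-spaces
\[H^3(G,\Fp)^\vee\simeq W^D,\]
where $W^D$ is a closed, hence compact, elementary abelian subgroup of $W$. We dualize once more. Pontryagin biduality for the discrete $\Fp$-space $H^3(G,\Fp)$ identifies $(W^D)^\vee$ with $H^3(G,\Fp)^{\vee\vee}\simeq H^3(G,\Fp)$. Taking Hamel dimensions and using $\tdim_{\Fp}W^D=\dim_{\Fp}(W^D)^\vee$ then yields
\[\kappa(G,N)=\tdim_{\Fp}W^D=\dim_{\Fp}H^3(G,\Fp),\]
which is exactly \eqref{eq:topdimension}. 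So the first assertion is essentially a restatement of Theorem~\ref{thm:topfixed}.

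The one point that needs care is the bookkeeping between Hamel dimension and topological rank when $W^D$ is infinite. The isomorphism in Theorem~\ref{thm:topfixed} must be read as an isomorphism of compact groups, with the Pontryagin dual of a discrete space carrying its natural profinite topology. Only then does dualizing again recover $H^3(G,\Fp)$ exactly. This holds because Lemma~\ref{lem:topdual} was obtained as a limit of finite-level dualities and is therefore topological. It is also why the corollary uses $\tdim$ rather than the Hamel dimension of $W^D$: the two differ for infinite-rank kernels.

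For the final clause, note that $\kappa(G,N)>0$ holds if and only if $(W^D)^\vee\neq0$. Since a compact group is trivial exactly when its Pontryagin dual is trivial, this is equivalent to $W^D\neq0$. By the equivalence (i)$\Leftrightarrow$(iv) of Theorem~\ref{thm:equiv}, that is in turn equivalent to $\cdp G=3$. Alternatively, one can use (i)$\Leftrightarrow$(ii) directly together with the displayed equality $\kappa(G,N)=\dim_{\Fp}H^3(G,\Fp)$.
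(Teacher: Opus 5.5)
Your proposal is correct and follows the paper's route. The paper gives no separate proof: the first assertion is just \eqref{eq:topdimension} from Theorem~\ref{thm:topfixed}, rewritten via Definition~\ref{def:kappa}, and the second is Theorem~\ref{thm:equiv}. You argue exactly this, and your remark that the isomorphism of Lemma~\ref{lem:topdual} is topological (being a limit of natural finite-level isomorphisms) correctly supplies the step the paper leaves implicit.

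Two small slips. First, $H^3(G,\Fp)^\vee$ and $W^D$ are compact, not discrete, as you yourself note in your second paragraph. Second, the Hamel dimension of $W^D$ differs from $\tdim_{\Fp}W^D$ exactly when $W^D$ is infinite, not for every infinite-rank kernel; the countable-rank examples of Theorem~\ref{thm:kappa-realizability} with finite $\kappa(G,N)$ show the two can agree there.
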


The criterion is also compatible with passage to open subgroups. Thus the dimension-three property can be detected at every finite-index level by the corresponding Frattini module.

\begin{corollary}\label{cor:open}
Let $U\le G$ be open, put
\[N_U=N\cap U,\]
and let $D_U$ be the image of $U$ in $D$. Then
\[\cdp G=3 \iff \cdp U=3 \iff \bigl(N_U/\Phi(N_U)\bigr)^{D_U}\neq0.\]
	\end{corollary}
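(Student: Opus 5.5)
The plan has two halves: first show that $\cdp G=3\iff \cdp U=3$ by Serre's theorem on open subgroups, and then identify $\cdp U=3$ with the Frattini criterion by applying Theorem~\ref{thm:equiv} to the restricted extension
\[1\longrightarrow N_U\longrightarrow U\longrightarrow D_U\longrightarrow1.\]

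For the first half, recall that for a pro-$p$ group $G$ and an open subgroup $U$ one has $\cdp U=\cdp G$; see Serre \cite{Serre}. The inequality $\cdp U\le\cdp G$ holds for every closed subgroup. Equality for open subgroups holds whenever $\cdp G<\infty$, and here this is guaranteed by \eqref{eq:upperbound}, which gives $\cdp G\le 3$. Hence $\cdp G=3$ if and only if $\cdp U=3$.

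For the second half, check that the restricted extension satisfies the hypotheses of Theorem~\ref{thm:equiv}, one ingredient at a time.
\begin{enumerate}[label=\textup{(\alph*)}]
\item Exactness: $N_U=N\cap U$ is exactly the kernel of $\pi|_U$, and $D_U=\pi(U)$ by definition.
\item The kernel $N_U$ is a closed subgroup of the free pro-$p$ group $N$, so it is free pro-$p$.
\item The kernel is nontrivial. Since $U$ is open, $N_U$ is open in $N$. A nontrivial free pro-$p$ group is infinite, so $N_U\neq1$.
\item The quotient $D_U$ is Demu\v{s}kin. Because $U$ is open in $G$ and $\pi$ is an open surjection, $D_U$ is open in $D$. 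An open subgroup of a Demu\v{s}kin group is again Demu\v{s}kin: it is finitely generated, and it is an open subgroup of a pro-$p$ Poincar\'e duality group of dimension two, hence itself a $PD^2$ pro-$p$ group \cite[Chapter~III]{NSW}.
\end{enumerate}
With these in place, Theorem~\ref{thm:equiv} gives $\cdp U=3\iff (N_U/\Phi(N_U))^{D_U}\neq0$, which completes the chain of equivalences.

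The only nonformal inputs are Serre's equality for open subgroups and the inheritance of the Demu\v{s}kin property by open subgroups. I expect the main care to be needed in the latter, namely quoting the $PD^2$ characterization correctly, including the finite generation and the condition $H^2(D_U,\Fp)\simeq\Fp$. Note that the fixed-point modules $W^D$ and $(N_U/\Phi(N_U))^{D_U}$ are not compared directly. Their simultaneous nonvanishing is forced through cohomological dimension, so no module-theoretic transfer argument is needed.
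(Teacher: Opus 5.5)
Your proposal is correct and follows essentially the same route as the paper: you use Serre's equality $\cdp U=\cdp G$ for open subgroups (justified by $\cdp G\le 3$), and then check that $1\to N_U\to U\to D_U\to 1$ satisfies the hypotheses of Theorem~\ref{thm:equiv}. The only cosmetic differences are in how two hypotheses are verified: you get freeness of $N_U$ from its being a closed subgroup of $N$, where the paper uses that it is open in $N$, and you get $N_U\neq 1$ from the fact that nontrivial free pro-$p$ groups are infinite, where the paper uses torsion-freeness.
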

	
\begin{proof}
The group $N_U$ is open in $N$ and hence is again free pro-$p$; see, for example, \cite{RZ}. Since $N_U$ has finite index in the nontrivial torsion-free pro-$p$ group $N$, it is nontrivial. The image $D_U$ is open in $D$, hence is again Demu\v{s}kin.
		
Cohomological dimension is invariant under passage to open subgroups in this finite-cohomological-dimension setting, so $\cdp U=\cdp G$. Applying Theorem~\ref{thm:equiv} to
\[1\longrightarrow N_U\longrightarrow U\longrightarrow D_U\longrightarrow1\]
gives the result.
\end{proof}

\section{Finite-rank rigidity and the infinite-rank boundary} \label{sec:rank}

The fixed-point criterion becomes especially rigid when the free kernel has finite rank. In that case the Frattini module is finite-dimensional, and the pro-$p$ nature of the quotient forces every nonzero finite-dimensional $\Fp$-module to contain fixed vectors. This elementary observation rules out dimension drop in the finite-rank setting.
	
\begin{lemma}\label{lem:fixedfinite}
Let $P$ be a pro-$p$ group acting continuously on a nonzero finite-dimensional $\Fp$-vector space $M$. Then $M^P\neq0$.
	\end{lemma}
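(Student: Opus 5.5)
The plan is to reduce to a finite $p$-group acting on a finite $p$-group and then count orbits. Since $M$ is finite and discrete and the action is continuous, the stabilizer of each vector is open in $P$. Because $M$ is finite, the kernel
\[K=\bigcap_{m\in M}\operatorname{Stab}_P(m)\]
of the action map $P\to\operatorname{Aut}(M)$ is a finite intersection of open subgroups, hence open and normal in $P$. The quotient $Q=P/K$ is therefore a finite quotient of a pro-$p$ group, so $Q$ is a finite $p$-group. Moreover $M^P=M^Q$, because $K$ acts trivially on $M$. This reduces the claim to the case of a finite $p$-group acting on a finite set.

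Next I apply the orbit--stabilizer theorem to the action of $Q$ on the finite set $M$. Each orbit has cardinality $[Q:\operatorname{Stab}_Q(m)]$, which is a power of $p$, and the orbit has size $1$ exactly when $m\in M^Q$. Partitioning $M$ into orbits gives
\[|M|\equiv |M^Q| \pmod p.\]
Since $M\neq0$ is a finite-dimensional $\Fp$-space, $|M|=p^{\dim M}$ is divisible by $p$. Hence $|M^Q|$ is also divisible by $p$. But $0\in M^Q$, so $|M^Q|\ge1$, and therefore $|M^Q|\ge p$. Consequently $M^Q$ contains a nonzero vector, and so $M^P\neq0$.

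An alternative route would use the fact that $\Fp[Q]$ is local with nilpotent augmentation ideal $I_Q$. One would take the last nonzero term of the chain $M\supseteq I_QM\supseteq I_Q^2M\supseteq\cdots$ and observe that it consists of fixed vectors. The counting argument is more elementary, however, so I would present that one.

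The main point requiring care is the reduction step: one must check that continuity of the action on a finite discrete module really makes the kernel open, so that the acting group can be taken finite. Once that is established, the rest is the standard $p$-group fixed-point count.
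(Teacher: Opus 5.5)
Your proof is correct, but it proves the key step differently from the paper. Both arguments first pass to the finite image of the action. The paper simply asserts that this image is finite, whereas you justify it through open stabilizers and the finiteness of $M$.

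After that reduction, the paper argues module-theoretically. Since $\Fp[\overline P]$ is local, its unique simple module is the trivial one. The nonzero socle of $M$ therefore consists of trivial submodules, that is, of fixed vectors. You instead use the orbit count $|M|\equiv|M^Q|\pmod p$.

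Your route is more elementary and self-contained. The locality of $\Fp[\overline P]$ is itself usually proved by exactly this kind of counting, or by the nilpotence of the augmentation ideal as in your alternative. Your argument also works unchanged for a finite $p$-group acting by automorphisms on any nontrivial finite $p$-group, not just on an $\Fp$-vector space.

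The paper's route keeps the lemma in representation-theoretic language (simple modules, socle), which matches the module-theoretic framing used elsewhere in the paper. It also implicitly gives the stronger fact that every composition factor of $M$ is trivial, so the action is unipotent.
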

	
\begin{proof}
The action has finite image. Since $P$ is pro-$p$, its image is a finite $p$-group $\overline P$. The group algebra $\Fp[\overline P]$ is local and has the trivial module as its unique simple module. Every nonzero finite-dimensional module has nonzero socle and therefore contains a nonzero trivial submodule.
\end{proof}
	
Applying the lemma to the Frattini module gives the finite-rank consequence for free-by-Demu\v{s}kin extensions.

\begin{corollary}\label{cor:finite}
Suppose $N\neq1$ is finitely generated free pro-$p$. Then every extension
\[1\longrightarrow N\longrightarrow G\longrightarrow D\longrightarrow1\]
with $D$ Demu\v{s}kin satisfies $\cdp G=3$. Moreover,
\[\dim_{\Fp}H^3(G,\Fp)=\dim_{\Fp}(N/\Phi(N))^D\ge1.\]\end{corollary}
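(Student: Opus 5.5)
The plan is to combine Lemma~\ref{lem:fixedfinite} with Theorem~\ref{thm:equiv} and Corollary~\ref{cor:kappa}. First, since $N$ is a nontrivial finitely generated free pro-$p$ group, say of rank $d\ge1$, its Frattini quotient is
\[W=N/\Phi(N)\simeq \Fp^{\,d}.\]
In particular $W$ is a nonzero finite-dimensional $\Fp$-vector space. It is nonzero because a free pro-$p$ group $N\neq1$ has $H^1(N,\Fp)=W^\vee\neq0$. Equivalently, $\Phi(N)\neq N$, since $\Phi(N)$ is the intersection of the maximal open subgroups of a nontrivial pro-$p$ group.

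Next, I would check that the conjugation action of $D$ on $W$ is continuous in the sense required by Lemma~\ref{lem:fixedfinite}. The action comes from the conjugation action of $G$ on $N$, and $N$ acts trivially on the abelian quotient $W$. So the action factors through $D=G/N$. On a finite discrete module, continuity of the action is the same as having an open kernel; this is part of the standing observation in Section~\ref{sec:prelim} that $W$ is a compact $\Fp\llbracket D\rrbracket$-module. Since $D$ is pro-$p$, Lemma~\ref{lem:fixedfinite} then gives $W^D\neq0$.

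Finally, Theorem~\ref{thm:equiv}, via (iv)$\Rightarrow$(i), yields $\cdp G=3$. Corollary~\ref{cor:kappa} gives
\[\dim_{\Fp}H^3(G,\Fp)=\kappa(G,N)=\tdim_{\Fp}W^D .\]
Because $W^D$ is finite, its topological rank equals its ordinary $\Fp$-dimension, as noted after Definition~\ref{def:tdim}. This is at least $1$ since $W^D\neq0$.

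There is no real obstacle here. The only points needing a brief justification are that $W\neq0$ for $N\neq1$ and that the induced action has finite image, so that Lemma~\ref{lem:fixedfinite} applies.
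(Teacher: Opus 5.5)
Your proposal is correct and matches the paper's proof: $W=N/\Phi(N)$ is a nonzero finite-dimensional $\Fp$-space, Lemma~\ref{lem:fixedfinite} gives $W^D\neq0$, and Theorem~\ref{thm:equiv} with the dimension identity of Theorem~\ref{thm:topfixed} (equivalently Corollary~\ref{cor:kappa}) finishes. Your extra checks — that $W\neq0$, that the action is continuous with finite image, and that $\tdim_{\Fp}$ agrees with ordinary dimension for finite $W^D$ — just make explicit what the paper leaves implicit.
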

	
\begin{proof}
The Frattini quotient $	N/\Phi(N)$ is a nonzero finite-dimensional $\Fp$-vector space. Lemma~\ref{lem:fixedfinite} gives
\[(N/\Phi(N))^D\neq0,\]
and Theorem~\ref{thm:equiv} applies.
\end{proof}
	
For finitely generated kernels, Corollary~\ref{cor:finite} recovers the dimension-additivity consequence of Schmidt--Wingberg's duality-extension theorem directly from the induced action on the first Frattini quotient. The contrapositive is equally useful: any failure to attain cohomological dimension three must therefore occur beyond the finite-rank regime.

\begin{corollary}\label{cor:infiniterank}
If $\cdp G\le2$, then $N$ has infinite rank and $(N/\Phi(N))^D=0$.
\end{corollary}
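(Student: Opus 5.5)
The argument is a contrapositive reading of the two preceding results. We keep the standing hypotheses, namely that $N\neq1$ is free pro-$p$ and $D$ is Demu\v{s}kin, and we assume $\cdp G\le2$. In particular $\cdp G\neq3$.

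The first step is to apply Theorem~\ref{thm:equiv}, using the equivalence (i)~$\Longleftrightarrow$~(iv). Since (i) fails, (iv) fails as well, and therefore
\[
(N/\Phi(N))^D=0 .
\]
This gives the second assertion of the statement directly.

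The second step proves the infinite-rank assertion by contradiction. Suppose $N$ had finite rank. Because $N\neq1$ is finitely generated free pro-$p$, Corollary~\ref{cor:finite} applies and yields $\cdp G=3$, contradicting $\cdp G\le2$. As an alternative route, we could instead use Lemma~\ref{lem:fixedfinite}. If $N$ had finite rank, then $N/\Phi(N)$ would be a nonzero finite-dimensional $\Fp$-space, since $N\neq1$ free implies $N\neq\Phi(N)$. The action of $D$ on it is continuous, so the lemma gives nonzero fixed points, which contradicts the first step. Either way, $N$ has infinite rank.

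There is no real obstacle here. The only point to check is that ``rank'' refers to the minimal number of topological generators of $N$. For free pro-$p$ groups this number is finite exactly when $N/\Phi(N)$ is finite, so finite rank is precisely the hypothesis of Corollary~\ref{cor:finite}, and the two steps together prove the statement.
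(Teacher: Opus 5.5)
Your proposal is correct and is essentially the paper's argument. The paper states this corollary without a separate proof, reading it as the contrapositive of Corollary~\ref{cor:finite}, with the vanishing of $(N/\Phi(N))^D$ coming from the equivalence (i)$\Leftrightarrow$(iv) of Theorem~\ref{thm:equiv}, exactly as you do.
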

	
This infinite-rank boundary is not merely formal. Known examples show that the dimension-drop branch is genuinely realized. Kochloukova--Zalesskii construct a finitely generated free-by-Demu\v{s}kin pro-$p$ group $G$ with infinite-rank free normal subgroup $N$ and $\cdp G=2$ \cite{KZ}. Theorem~\ref{thm:equiv} therefore gives 
\[(N/\Phi(N))^D=0.\]
This identifies the first-Frattini mechanism that accompanies their dimension drop.

Infinite rank alone, however, does not force a dimension drop. The simplest contrasting example is provided by the direct product.
	
\begin{example}[Infinite rank with dimension three]\label{ex:infinitecd3}
Let $N\neq1$ be free pro-$p$ of arbitrary rank and put
\[G=N\times D.\]
The $D$-action on $N/\Phi(N)$ is trivial, so
\[(N/\Phi(N))^D=N/\Phi(N)\neq0.\]
Hence $\cdp G=3$. Thus infinite rank allows both the dimension-three and dimension-drop branches.
\end{example}

The preceding example shows that infinite rank permits both sides of the fixed-point criterion. In fact, the possible fixed-point ranks can be realized completely.

\begin{theorem}\label{thm:kappa-realizability}
Let $D$ be an infinite Demu\v{s}kin group. Then every cardinal $\lambda$ occurs as $\kappa(G,N)$ for some free-by-Demu\v{s}kin extension
\[1\longrightarrow N \longrightarrow G \longrightarrow D\longrightarrow 1\]
with $N$ of infinite rank. More precisely, for every finite integer $k\ge 0$, there exists such an extension with $N$ free pro-$p$ of countably infinite rank and
\[\kappa(G,N)=k.\]
For every infinite cardinal $\lambda$, one may take $N$ to be free pro-$p$ of rank $\lambda$.
\end{theorem}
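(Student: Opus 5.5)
The plan is to build each extension as an explicit semidirect product $G=N\rtimes D$. In it, $N$ is a free pro-$p$ product of two kinds of factor:
\begin{itemize}[label=$\cdot$]
\item a \emph{trivial factor} $F$, on which $D$ acts trivially;
\item a \emph{regular factor} $N_0$, whose Frattini module is the completed group algebra $\Fp\llbracket D\rrbracket$.
\end{itemize}
The invariants of the regular factor vanish because $D$ is infinite. So $\kappa(G,N)$ will be exactly the topological rank of $F/\Phi(F)$, which can be prescribed freely.

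\emph{Step 1: the regular factor.} Consider the free pro-$p$ product $\Gamma=D\amalg\Zp$ and the epimorphism $\Gamma\to D$ that is the identity on $D$ and kills $\Zp=\overline{\langle t\rangle}$. Its kernel $N_0$ is the closed normal subgroup generated by $t$, and $\Gamma=N_0\rtimes D$.

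I will argue that $N_0$ is free pro-$p$ on the conjugates $\{dtd^{-1}\}_{d\in D}$, regarded as a basis indexed by the profinite space $D$. One route is the Kurosh-type subgroup theorem for free pro-$p$ products: $N_0$ meets every conjugate of $D$ trivially and every conjugate of $\Zp$ in a free group. A more direct route is to check the universal property against finite $p$-groups.

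It follows that $W_0=N_0/\Phi(N_0)\simeq\Fp\llbracket D\rrbracket$ as a compact $\Fp\llbracket D\rrbracket$-module, with $D$ acting by left translation. Since $D$ is finitely generated and infinite, the space $D$ is infinite and second countable, so $N_0$ has countably infinite rank. Next I show $\Fp\llbracket D\rrbracket^D=0$. A fixed element has fixed images in every quotient $\Fp[D/U]$ with $U$ open normal. Those invariants are spanned by the norm element $\sum_{x\in D/U}x$, and the norm of $D/U$ maps to $[U:V]\cdot(\text{norm of }D/V)=0$ under the transition maps for $V\le U$ of index $p$. Hence every compatible family of invariants is zero.

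\emph{Step 2: adding the fixed directions.} Let $F$ be free pro-$p$ of rank $k$ (finite) or $\lambda$ (infinite), on a basis converging to $1$. Put
\[\Gamma_F=(F\times D)\amalg_D(D\amalg\Zp)=(F\times D)\amalg\Zp,\]
with the map to $D$ killing $F$ and $t$. Its kernel $N$ should be the free product of $F$ with $N_0$, with $D$ acting trivially on $F$ modulo the identifications. Then $N/\Phi(N)\simeq F/\Phi(F)\times\Fp\llbracket D\rrbracket$ as $D$-modules, since Frattini quotients of free products are products. This gives
\[(N/\Phi(N))^D=F/\Phi(F),\qquad \kappa(G,N)=\tdim_{\Fp}F/\Phi(F),\]
and the right-hand side equals $k$ or $\lambda$ respectively. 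The rank of $N$ is $\aleph_0$ when $k$ is finite and $\lambda$ when $\lambda$ is infinite. The case $k=0$ is just Step 1. By Corollary~\ref{cor:kappa}, the $k=0$ case also has $\cdp G\le2$, which is consistent with Corollary~\ref{cor:infiniterank}.

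\emph{Main obstacle.} The delicate point is rigorously identifying the kernels as free pro-$p$ with the claimed Frattini modules. Two things need care: the subgroup theorem for free pro-$p$ products applied to closed normal subgroups of infinite index, and the continuity of the $D$-action on bases indexed by a profinite space.

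My first attempt will be the universal property. Let $P$ be a finite $p$-group, and suppose we are given a continuous map $D\to P$ on the generators $dtd^{-1}$, together with a homomorphism $F\to P$. I extend this to a homomorphism from $N$ by building a homomorphism from $\Gamma_F$ into the wreath-type group $P^{D/U}\rtimes D/U$ for suitable open normal $U$, and then restricting to $N$. This simultaneously gives freeness and the Frattini identification.

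If this becomes cumbersome, the fallback is to define $N$ abstractly as the free pro-$p$ group on the pointed profinite space $(D\sqcup\{*\})\vee B$, where $B$ is a basis of $F$ converging to the base point. $D$ then acts by left translation on $D$ and trivially on $B$, and functoriality of free pro-$p$ groups on pointed profinite $D$-spaces gives a continuous action, so that one can form $G=N\rtimes D$ directly.
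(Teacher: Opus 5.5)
Your proof is correct, but it takes a genuinely different route from the paper's.

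\textbf{The paper's route.} For finite $k$, the paper uses the classification of Demu\v{s}kin groups to obtain an epimorphism $q\colon D\twoheadrightarrow\Zp$. It then writes down one explicit automorphism $f$ of the free pro-$p$ group on $x_1,x_2,\ldots\to1$, namely $x_i\mapsto x_i$ for $i\le k$ and $x_i\mapsto x_ix_{i+1}$ for $i>k$. It checks that $\overline{\langle f\rangle}\simeq\Zp$ and that $W\simeq\Fp^k\times\Fp\llbracket T\rrbracket$; the fixed points are $\Fp^k$ because $\Fp\llbracket T\rrbracket$ is a domain. For infinite $\lambda$ it simply takes $N_\lambda\times D$.

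\textbf{Your route.} You realize the regular module $\Fp\llbracket D\rrbracket$ of $D$ itself as the Frattini module of the free pro-$p$ group $F(D)$ on the profinite $D$-space $D$, and add a free factor with trivial action. This needs only that $D$ is infinite and finitely generated: no classification and no $\Zp$-quotient. It also treats all cardinals uniformly, although for infinite $\lambda$ the factor $F(D)$ is superfluous, since $F\times D$ already works. For $k=0$ your $G=D\amalg\Zp$ is even finitely generated.

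\textbf{Supplying the technical input.} The cost is exactly the step you flagged. Your fallback is the cleanest way to handle it, and it needs no Kurosh-type theorem for closed subgroups.
\begin{itemize}
\item Define $\tilde N=F\amalg F(D)$ directly.
\item The $D$-action is continuous because $F(D)=\varprojlim_U F(D/U)$, and $D$ acts on each $F(D/U)$ through the finite group $D/U$.
\item If you also want the free-product picture, let $x_d$ denote the basis element of $F(D)$ indexed by $d\in D$. The homomorphisms $(F\times D)\amalg\Zp\to\tilde N\rtimes D$ with $t\mapsto x_1$, and $\tilde N\rtimes D\to(F\times D)\amalg\Zp$ with $x_d\mapsto dtd^{-1}$, are mutually inverse on topological generators. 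This identifies the kernel with $\tilde N$.
\end{itemize}

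\textbf{One small slip.} The transition map $\Fp[D/V]\to\Fp[D/U]$ sends the norm of $D/V$ to $[U:V]$ times the norm of $D/U$, not the other way round. With that correction, use any open normal subgroup $V$ of $D$ properly contained in $U$; such $V$ exists because $D$ is infinite. Then your argument that $\Fp\llbracket D\rrbracket^D=0$ goes through.
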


\begin{proof}
We first realize the finite values. Fix $k\ge 0$. By the classification of infinite Demu\v{s}kin groups, the abelianization of $D$ has positive $\Zp$-rank; see \cite{Labute} or \cite[Chapter~III, \S~9]{NSW}. Hence there is a continuous epimorphism
\[q:D \twoheadrightarrow \Zp.\]
	
Let $N$ be a free pro-$p$ group of countably infinite rank with a convergent free basis
\[x_1,x_2,x_3,\ldots \longrightarrow 1.\]
For each $n\ge1$, let
\[K_n=\overline{\langle x_{n+1},x_{n+2},\ldots\rangle}^{\,N}\]
be the closed normal subgroup generated by the tail beginning with $x_{n+1}$. Then
\[F_n:=N/K_n\]
is the free pro-$p$ group on the images of $x_1,\ldots,x_n$, and the natural quotient maps give \(N\simeq\varprojlim_n F_n\). 
	
For each $n$, define an automorphism $f_n\in\operatorname{Aut}(F_n)$ by
\[\begin{cases} 
f_n(x_j)=x_j&(j\le k),\\
f_n(x_j)=x_jx_{j+1}&(k<j<n),\; \; \text{ and }\\
f_n(x_n)=x_n&(n>k).
\end{cases}\]
If $n\le k$, this simply means that $f_n$ is the identity. Each $f_n$ is an automorphism. Indeed, it is triangular with respect to the free basis $x_1,\ldots,x_n$: beginning with $x_n$, which is fixed, one can recover successively $x_{n-1},x_{n-2},\ldots,x_{k+1}$ from their images. Equivalently, these are finite compositions of elementary Nielsen automorphisms.
	
The automorphisms $f_n$ are compatible with the natural projections $F_{n+1}\to F_n$. Hence they determine an automorphism
\[f\in\operatorname{Aut}(N)\]
satisfying
\[\begin{cases}
f(x_i)=x_i& (i\le k),\;\; \text{ and }\\
f(x_i)=x_ix_{i+1}&(i>k).
\end{cases}\]
	
We next show that the closed subgroup generated by $f$ is isomorphic to $\Zp$. Put
\[H=\overline{\langle f\rangle}\le\operatorname{Aut}(N).\]
For every $n$, let
\[H_n=\overline{\langle f_n\rangle}\le\operatorname{Aut}(F_n).\]
The action of $f_n$ on the Frattini quotient $\displaystyle F_n/\Phi(F_n)\simeq\Fp^n$ is represented by a unitriangular matrix. Its image therefore lies in a finite $p$-subgroup of $\operatorname{GL}_n(\Fp)$. The kernel of the natural homomorphism
\[\operatorname{Aut}(F_n)\longrightarrow\operatorname{GL}\bigl(F_n/\Phi(F_n)\bigr)\]
is pro-$p$, and consequently $H_n$ is a pro-$p$ group.
	
For $n\ge k+2$, the automorphism $f_n$ has infinite order. Indeed, on the abelianization $F_n^{\mathrm{ab}}\simeq\Zp^n$ we have
\[f_n(x_{n-1})=x_{n-1}+x_n,\;\; f_n(x_n)=x_n\]
in additive notation. Therefore, for every positive integer $m$,
\[f_n^m(x_{n-1})=x_{n-1}+mx_n.\]
Since $\Zp$ is torsion-free, this is different from $x_{n-1}$ for every $m>0$. Thus $f_n$ has infinite order. Since $H_n$ is an infinite procyclic pro-$p$ group, it follows that
\[H_n\simeq\Zp\qquad(n\ge k+2).\]
	
The actions on the quotients $F_n$ separate automorphisms of $N$. Hence the natural map
\[H\longrightarrow\prod_{n\ge1}H_n\]
is injective. Since each $H_n$ is pro-$p$, the group $H$ is pro-$p$. Moreover, the projection $H\to H_{k+2}$ is surjective, because the image contains the dense cyclic subgroup generated by $f_{k+2}$ and is compact, hence closed. Therefore $H$ is infinite. Since $H$ is by definition procyclic, we conclude that
\[H\simeq\Zp.\]
	
We now compute the induced action on the Frattini module $W=N/\Phi(N)$. Write
\[e_i=x_i\Phi(N).\]
Since the free basis of $N$ converges to $1$, the elements $e_i$ form a topological basis and $\displaystyle W\simeq\prod_{i\ge1}\Fp e_i$. Let $\gamma=f$ denote the distinguished topological generator of $H$, and put
\[\Lambda=\Fp\llbracket H\rrbracket\simeq\Fp\llbracket T\rrbracket,\qquad T=\gamma-1.\]
The action of $f$ on $W$ is
\[\begin{cases} 
\gamma e_i=e_i&(i\le k),\;\; \text{ and }\\
\gamma e_i=e_i+e_{i+1}&(i>k).\end{cases}\]
Hence
\[\begin{cases} 
Te_i=0&(i\le k),\;\; \text{ while }\\
Te_i=e_{i+1}&(i>k).\end{cases}\]
	
Let $\displaystyle W_1=\overline{\langle e_{k+1},e_{k+2},\ldots\rangle}$. Then
\[W=\Fp^k\times W_1,\]
where $H$ acts trivially on the first factor. Define
\[\theta:\Fp\llbracket T\rrbracket\longrightarrow W_1\]
by
\[\theta\left(\sum_{j\ge0}a_jT^j\right)=\sum_{j\ge0}a_je_{k+1+j}.\]
This map is continuous and bijective. Moreover,
\[\theta\left(T\sum_{j\ge0}a_jT^j\right)=T\theta\left(\sum_{j\ge0}a_jT^j\right),\]
because $Te_{k+1+j}=e_{k+2+j}$. Thus $\theta$ is an isomorphism of compact $\Fp\llbracket H\rrbracket$-modules. Consequently,
\[W\simeq\Fp^k\times\Fp\llbracket H\rrbracket\simeq\Fp^k\times\Fp\llbracket T\rrbracket.\]
	
The regular module $\Fp\llbracket T\rrbracket$ has no nonzero $H$-fixed element. Indeed, an element $u\in\Fp\llbracket T\rrbracket$ is fixed by $H$ if and only if it is fixed by $\gamma$, equivalently \(Tu =0\). Since $\Fp\llbracket T\rrbracket$ is an integral domain, multiplication by $T$ is injective, and therefore $u=0$. Hence
\[W^H\simeq\Fp^k.\]
	
Choose an isomorphism \(\Zp\overset{\sim}{\longrightarrow}H\) sending $1$ to $f$, let $D$ act on $N$ through the composite
\[D\overset{q}{\twoheadrightarrow}\Zp\overset{\sim}{\longrightarrow}H\le\operatorname{Aut}(N),\]
and form the semidirect product
\[G=N\rtimes D.\]
Then
\[1\longrightarrow N\longrightarrow G\longrightarrow D\longrightarrow1\]
is a free-by-Demu\v{s}kin extension. Since the action of $D$ on $W$ factors surjectively through $H$,
\[W^D=W^H\simeq\Fp^k.\]
Therefore
\[\kappa(G,N)=\tdim_{\Fp}W^D=k.\]
This realizes every finite value $k\ge0$ with $N$ of countably infinite rank.
	
It remains to realize the infinite cardinal values. Let $\lambda$ be an infinite cardinal, let $N_\lambda$ be a free pro-$p$ group of rank $\lambda$ and let $D$ act trivially on $N_\lambda$, and put
\[G_\lambda=N_\lambda\times D.\]
Then
\[\bigl(N_\lambda/\Phi(N_\lambda)\bigr)^D=N_\lambda/\Phi(N_\lambda).\]
By the definition of the rank of a free pro-$p$ group, $\dim_{\Fp}H^1(N_\lambda,\Fp)=\lambda$. Since
\[H^1(N_\lambda,\Fp)=\bigl(N_\lambda/\Phi(N_\lambda)\bigr)^\vee,\]
Definition~\ref{def:tdim} gives $\tdim_{\Fp}\bigl(N_\lambda/\Phi(N_\lambda)\bigr)=\lambda$. Hence
\[\kappa(G_\lambda,N_\lambda)=\lambda.\]
	
Thus every cardinal occurs as the kernel fixed-point rank of a free-by-Demu\v{s}kin extension with free kernel of infinite rank, and every finite value already occurs with a countably infinite-rank kernel.
\end{proof}

\section{Duality inside the dimension-three branch} \label{sec:duality}
	
The fixed-Frattini criterion detects cohomological dimension. For finite-rank kernels, classical extension theory gives the stronger duality structure.
	
\subsection{Finite-rank kernels}

The point is that dimension three by itself is weaker than duality. For finitely generated free kernels, the finiteness hypotheses required by the duality-extension theorem are automatically satisfied, so the dimension-three conclusion strengthens to a duality statement.

We first record this finite-rank refinement.

\begin{proposition}\label{prop:duality}
If $N\neq1$ is finitely generated free pro-$p$ and $D$ is Demu\v{s}kin, then $G$ is a profinite duality group at $p$ of dimension three.
\end{proposition}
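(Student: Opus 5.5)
The plan is to invoke the duality-extension theorem of Schmidt--Wingberg \cite{SchmidtWingberg}. For an extension $1\to N\to G\to D\to1$ in which both $N$ and $D$ are duality groups at $p$ of dimensions $n$ and $d$, and $N$ satisfies the finiteness condition $FC(N,p)$, their theorem says that $G$ is a duality group at $p$ of dimension $n+d$. So it suffices to check three things: $D$ is a duality group at $p$ of dimension two, $N$ is a duality group at $p$ of dimension one, and $N$ satisfies $FC(N,p)$. Then the conclusion follows with dimension $1+2=3$, consistent with Corollary~\ref{cor:finite}.

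\textbf{The quotient.} $D$ is a Demu\v{s}kin group, hence a pro-$p$ Poincar\'e-duality group of dimension two, as recalled in Section~\ref{sec:prelim}. In particular it is a duality group at $p$ of dimension two. This is exactly the input already used in Lemma~\ref{lem:topdual}.

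\textbf{The kernel.} A nontrivial finitely generated free pro-$p$ group $N$ has $\cdp N=1$. Its cohomology groups $H^q(U,\Fp)$ are finite for every open subgroup $U\le N$: an open subgroup is again free of finite rank by the Schreier formula, so $H^1(U,\Fp)$ is finite, $H^0$ is $\Fp$, and higher groups vanish. The standard criterion for duality at $p$ in the top degree $n=\cdp N$ requires that
\[\varinjlim_{U} H^q(U,\Fp)^\vee=0\qquad (q<n),\]
with the limit taken over open subgroups $U$ and corestriction duals. For $n=1$ this reduces to $q=0$. The dual of corestriction $H^0(U,\Fp)\to H^0(V,\Fp)$ for $U\le V$ is multiplication by the index $(V:U)$, which is zero once $U\neq V$ because the index is a power of $p$. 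Since $N$ is infinite, there are proper open subgroups at every level, so the limit vanishes and $N$ is a duality group of dimension one at $p$. The finiteness of the groups $H^q(U,\Fp)$ for all open $U$ is precisely the content of $FC(N,p)$ in Schmidt--Wingberg's formulation, and it holds by the Schreier bound.

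\textbf{Main obstacle.} The main obstacle is matching hypotheses precisely with the cited theorem. I must make sure that $FC$ is required only of the kernel, as in their formulation, and not of $G$, and that it is stated with $\Fp$ or $p$-primary finite coefficients, which is covered by dévissage from $\Fp$. If their theorem instead demands finiteness of $H^q(U,A)$ for all finite $p$-primary $A$, I would deduce it by induction on the length of $A$ via long exact sequences, using that every finite $p$-primary $U$-module has a composition series with trivial $\Fp$ factors (Lemma~\ref{lem:fixedfinite}, applied to an open subgroup acting on the module). Once the hypotheses are verified, the theorem yields that $G$ is a duality group at $p$ of dimension three.
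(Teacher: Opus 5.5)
Your proposal is correct and follows the paper's proof. Like the paper, you verify $FC(N,p)$ from the fact that open subgroups of a finitely generated free pro-$p$ group are again finitely generated free, note that $N$ and $D$ are duality groups at $p$ of dimensions one and two, and apply Schmidt--Wingberg's extension theorem \cite[Theorem~1(ii)]{SchmidtWingberg}; your check that $\varinjlim_U H^0(U,\Fp)^\vee=0$ fills in a step the paper simply asserts, and the dévissage remark is unnecessary because the paper takes $FC(N,p)$ with $\Fp$ coefficients.
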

	
\begin{proof}
Schmidt--Wingberg's condition $FC(N,p)$ requires $H^i(U,\Fp)$ to be finite for every open subgroup $U\le N$ and every $i\ge0$; see \cite[p.~2]{SchmidtWingberg}. If $N$ is finitely generated free pro-$p$, then every open subgroup $U$ is again finitely generated free pro-$p$. Therefore
\[H^i(U,\Fp)=0\qquad(i\ge2),\]
while $H^0(U,\Fp)\simeq\Fp$ and $H^1(U,\Fp)$ is finite-dimensional. Hence $FC(N,p)$ holds.
		
The group $N$ is a duality group at $p$ of dimension one, and the Demu\v{s}kin group $D$ is a duality group at $p$ of dimension two. Schmidt--Wingberg's extension theorem \cite[Theorem~1(ii)]{SchmidtWingberg} therefore gives that $G$ is a duality group at $p$, with
\[\cdp G=\cdp N+\cdp D= 1+2=3.\]
\end{proof}
	
\subsection{The $PD^3$ case}
	
Poincar\'e duality imposes a substantially stronger condition than profinite
duality. In the present setting, this additional rigidity is reflected in the
structure of the free normal subgroup. Poincar\'e-duality extensions were studied by Wingberg \cite{WingbergPD}. In dimension three, normal-subgroup structure gives a strong restriction on free kernels. 

The normal-subgroup theorem for pro-$p$ $PD^3$ groups then forces the free kernel to have rank one.
	
\begin{proposition}	\label{prop:PDnecessary}
Assume that $N\neq1$ is finitely generated free pro-$p$, $D$ is Demu\v{s}kin, and $G$ is a pro-$p$ $PD^3$ group. Then $N\simeq\Zp$.\end{proposition}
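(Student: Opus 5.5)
The plan is to apply the normal-subgroup theorem of Grunewald--Jaikin-Zapirain--Pinto--Zalesskii \cite{GJPPZ} directly. As recalled in the introduction, it says that in a pro-$p$ $PD^3$ group, a finitely generated free normal subgroup with Demu\v{s}kin quotient must be cyclic. First we check the hypotheses. $G$ is $PD^3$ by assumption, and $N$ is a finitely generated free normal subgroup. The quotient $G/N\simeq D$ is Demu\v{s}kin, and its cohomological dimension $2$ matches $3-\cdp N$, consistent with Corollary~\ref{cor:finite} and Proposition~\ref{prop:duality}. The theorem then gives that $N$ is procyclic. A nontrivial procyclic pro-$p$ group is either $\Zp$ or a finite cyclic $p$-group. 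Since $N$ is free pro-$p$, it is torsion-free, so $N\simeq\Zp$.

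If one wants an argument less dependent on the exact formulation in \cite{GJPPZ}, I would try to derive rank one from duality. In a $PD^3$ group $G$ with $N$ finitely generated free and $\cdp(G/N)=2$, Wingberg's theory \cite{WingbergPD} (or Schmidt--Wingberg, whose $FC(N,p)$ was verified in Proposition~\ref{prop:duality}) says that $N$ is itself a Poincar\'e-duality group of dimension $3-2=1$, because the dualizing module of $G$ restricts compatibly. A pro-$p$ $PD^1$ group has $H^1(N,\Fp)\simeq\Fp$. Hence $N/\Phi(N)$ is one-dimensional, so $N$ is free of rank one, that is, $N\simeq\Zp$.

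The main obstacle is the middle step of this second argument: showing that $N$ inherits Poincar\'e duality, not just duality, from $G$ and $D$. This requires the dualizing module of $N$ to be $\Qp/\Zp$ as an abelian group, rather than a larger module such as an induced one. I would first invoke the extension theorem in its converse direction, \cite[Theorem~1]{SchmidtWingberg} and \cite{WingbergPD}: if $G$ and $G/N$ are $PD$ groups and $N$ satisfies the finiteness condition, then $N$ is $PD$. Failing that, I would fall back on citing \cite{GJPPZ} verbatim as in the first paragraph.
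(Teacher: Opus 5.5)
Your first paragraph is the paper's proof: both rest on \cite[Corollary~1.5]{GJPPZ}. The cited result is really a dichotomy for finitely generated normal subgroups with infinite quotient (``infinite cyclic with virtually Demu\v{s}kin quotient'' versus ``Demu\v{s}kin with virtually infinite-cyclic quotient''), so you should note that $D$ is infinite and discard the Demu\v{s}kin alternative because $\cdp N=1$; ``infinite cyclic'' then gives $N\simeq\Zp$ directly, and neither your torsion-freeness step nor the duality-based fallback is needed.
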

	
\begin{proof}
By \cite[Corollary~1.5]{GJPPZ}, a finitely generated normal subgroup of a pro-$p$ $PD^3$ group with infinite quotient is either infinite cyclic with virtually Demu\v{s}kin quotient, or Demu\v{s}kin with virtually infinite-cyclic quotient.
		
Here $N$ is free pro-$p$ and $G/N=D$ is Demu\v{s}kin. The second alternative is impossible because a nontrivial free pro-$p$ group has cohomological dimension one. Hence $N\simeq\Zp$. 
\end{proof}

These results place the fixed-Frattini criterion within a sequence of
increasingly restrictive structural conditions.

Thus the dimension-three branch has the hierarchy
\[ (N/\Phi(N))^D\neq0 \quad\Longleftrightarrow\quad \cdp G=3, \]
\[N\ \text{finitely generated} \quad\Longrightarrow\quad G\ \text{is a duality group of dimension }3, \]
and, under the finite-rank hypothesis,
\[ G\ \text{a }PD^3\text{ group }\quad\Longrightarrow\quad N\simeq\Zp.
\]

\section{The lower-dimensional complement}
\label{sec:complement}

The preceding sections identify the dimension-three branch through the nonvanishing of the fixed submodule $W^D$. We now turn to the complementary case, where this top-degree contribution disappears. The remaining problem is to distinguish whether the extension has cohomological dimension one or two.
	
Assume now $W^D=0$. By Theorem~\ref{thm:equiv},
\[H^3(G,\Fp)=0 \qquad\text{and}\qquad \cdp G\le2. \]
	
Let
\[\tau=d_2^{0,1}:V^D \longrightarrow H^2(D,\Fp) \simeq \Fp\]
be the Hochschild--Serre transgression. The $E_\infty$ filtration gives the exact sequences
\begin{equation}
0\longrightarrow H^1(D,\Fp) \longrightarrow H^1(G,\Fp) \longrightarrow \ker\tau \longrightarrow0
\end{equation}
and
\begin{equation}\label{eq:H2low}
0 \longrightarrow \coker\tau \longrightarrow H^2(G,\Fp) \longrightarrow H^1(D,V) \longrightarrow0.
	\end{equation}

Thus, once the degree-three contribution vanishes, the remaining obstruction is concentrated in two pieces: the group $H^1(D,V)$ and the transgression from $V^D$ to the one-dimensional space $H^2(D,\Fp)$.
	
\subsection{The relation defect}

The transgression admits a convenient dual interpretation on the coinvariant side of the Frattini module. This leads to a single class in $W_D$ that records whether the transgression is nonzero.
	
By \eqref{eq:invcoinvdual},
\[V^D= (W^\vee)^D \simeq (W_D)^\vee. \]
Since
\[H^2(D,\Fp)\simeq\Fp,\]
the dual transgression is a map
\[\tau^\vee: H^2(D,\Fp)^\vee \longrightarrow W_D. \]
	
\begin{definition}\label{def:defect}
Let
\[1\longrightarrow N\longrightarrow G\longrightarrow D\longrightarrow1\]
be as above, let
\[\tau:H^1(N,\Fp)^D\longrightarrow H^2(D,\Fp)\]
be the Hochschild--Serre transgression, and let
\[W_D=W/\overline{I_DW}.\]
Choose a nonzero element $\eta^\vee\in H^2(D,\Fp)^\vee$ and define
\[\delta_G:= \tau^\vee(\eta^\vee) \in W_D. \]
The class $\delta_G$ is well defined up to multiplication by a nonzero scalar, so the condition $\delta_G\neq0$ is intrinsic.
\end{definition}

The abstractly defined class $\delta_G$ also has a concrete interpretation
in terms of a defining relation of the Demu\v{s}kin quotient.

\begin{lemma}
\label{lem:relationdefect}
Let
\[1\longrightarrow R\longrightarrow F\longrightarrow D\longrightarrow1\]
be a minimal pro-$p$ presentation of $D$, with $F$ free pro-$p$, and write
\[R=\overline{\langle\!\langle r\rangle\!\rangle}\]
for a defining Demu\v{s}kin relation. Choose lifts in $G$ of a free basis of $F$. They determine a homomorphism
\[\alpha:F\longrightarrow G\]
lifting the identity on $D$. Put
\[w:=\alpha(r)\in N.\]
Then, up to the scalar normalization in Definition~\ref{def:defect},
\[\delta_G=\overline{w}\in W_D. \]
	\end{lemma}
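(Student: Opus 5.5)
Since $W_D$ is a compact elementary abelian group and $(W_D)^\vee\simeq V^D$ by \eqref{eq:invcoinvdual}, two elements of $W_D$ agree if and only if every $\varphi\in V^D$ takes the same value on them. So, after fixing a nonzero $\eta^\vee$, the lemma amounts to the identity
\[
\tau(\varphi)=c\cdot\varphi(w)\cdot\eta\qquad(\varphi\in V^D)
\]
for a nonzero constant $c$ independent of $\varphi$. Here $\eta\in H^2(D,\Fp)$ is the generator dual to $\eta^\vee$, and $\varphi(w)$ denotes the value of $\varphi$ on $\overline w\in W$. I will first check that $\overline w\in W_D$ does not depend on the choices of lifts, and then prove this identity by explicit cochain or naturality arguments.

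\textbf{Step 1: independence of choices.} If the lifts are changed by elements of $N$, we get $\alpha'(x)=\alpha(x)n_x$. Since $r\in F^p[F,F]$ and $R$ is generated by $r$ as a closed normal subgroup, the images of $\alpha(r)$ and $\alpha'(r)$ in $W$ differ by a combination of terms $(g-1)\overline{n}$ and $p\,\overline n$. All of these lie in $\overline{I_DW}$. Replacing $r$ by another defining relation multiplies the class by a unit, or conjugates it, which again does not change the class modulo $\overline{I_DW}$.

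\textbf{Step 2: compute $\tau$ by naturality.} The map $\alpha$ gives a morphism of extensions from $1\to R\to F\to D\to1$ to $1\to N\to G\to D\to 1$, restricting to $\alpha|_R:R\to N$. Transgression is natural for such morphisms, so $\tau_G(\varphi)=\tau_F(\varphi\circ\alpha|_R)$ for $\varphi\in V^D$. The transgression of the free presentation is the classical isomorphism
\[
\tau_F:H^1(R,\Fp)^F=\operatorname{Hom}_F(R/R^p[R,F],\Fp)\xrightarrow{\ \sim\ }H^2(D,\Fp),
\]
which comes from the five-term sequence using $H^2(F)=0$ and minimality ($H^1(F)\simeq H^1(D)$). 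Because $R/R^p[R,F]\simeq\Fp$ is generated by the image of $r$, the map $\tau_F$ sends $\psi$ to $\psi(r)\eta_0$ for a fixed generator $\eta_0$. Combining the two gives $\tau(\varphi)=\varphi(\alpha(r))\eta_0=\varphi(w)\eta_0$. Dualizing then yields $\tau^\vee(\eta_0^\vee)=\overline w$, and a different choice of $\eta^\vee$ only rescales this.

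\textbf{Main obstacle.} The delicate point is the naturality of $d_2^{0,1}$ along $\alpha$. The map $\alpha$ need not be injective or surjective, so this is naturality for a general morphism of extensions over the identity of $D$. I would justify it with the five-term exact sequence, which is functorial in morphisms of extensions \cite[Proposition~1.6.7]{NSW}. If that reference is not explicit enough, I would instead give the cocycle description: take a continuous section $s$ of $G\to D$, and then $\tau(\varphi)$ is represented by $\varphi\bigl(s(a)s(b)s(ab)^{-1}\bigr)$. Choosing $s=\alpha\circ s_F$ reduces everything to the corresponding cocycle in $F$, whose class pairs with the relation $r$ via the Hopf/Labute description of $H^2(D,\Fp)$ as $\operatorname{Hom}(R/R^p[R,F],\Fp)$.
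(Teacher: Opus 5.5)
Your proposal is correct and follows essentially the paper's route: naturality of the five-term sequence along the morphism of extensions induced by $\alpha$, the identification of the presentation transgression $H^1(R,\Fp)^F\to H^2(D,\Fp)$ as evaluation at $r$ (using $R/R^p[R,F]\simeq\Fp$), and dualization through $(W_D)^\vee\simeq V^D$. Your Step 1 is not needed, because independence of choices already follows from the conclusion since $\delta_G$ is intrinsic. Your cocycle check with the section $\alpha\circ s_F$ is a valid direct proof of the naturality that the paper simply cites.
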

	
\begin{proof}
Because the presentation of $D$ is minimal, inflation induces an isomorphism
\[H^1(D,\Fp)\overset{\sim}{\longrightarrow}H^1(F,\Fp). \]
Since $F$ is free pro-$p$, $H^2(F,\Fp)=0$. The Hochschild--Serre five-term sequence \cite[Proposition~1.6.7]{NSW} therefore identifies the presentation transgression
\[\operatorname{trg}_R: H^1(R,\Fp)^D \overset{\sim}{\longrightarrow} H^2(D,\Fp).\]
		
The standard relation/transgression pairing is
\[ \frac{R}{R^p[R,F]} \times H^1(R,\Fp)^D \longrightarrow \Fp, \qquad
(\overline{s},\psi) \longmapsto \psi(s). \]
The canonical identification
\[H^1(R,\Fp)^D\simeq \operatorname{Hom}_{\Fp} \left(R/R^p[R,F],\Fp\right)\]
gives the perfect evaluation pairing
\[\frac{R}{R^p[R,F]} \times H^1(R,\Fp)^D \longrightarrow \Fp, \qquad (\overline{s},\psi) \longmapsto \psi(s). \]
For a minimal pro-$p$ presentation, the images of a minimal system of normal generators of $R$ form an $\Fp$-basis of $R/R^p[R,F]$; see \cite[Corollary~3.9.3]{NSW}. Since $D$ is one-relator and 
\[R=\overline{\langle\!\langle r\rangle\!\rangle},\]
the class of $r$ is a basis of this relation module.

The chosen lifts give a morphism of extensions
\[\begin{array}{ccccccccc}
1&\longrightarrow&R&\longrightarrow&F&\longrightarrow&D&\longrightarrow&1\\
&&\downarrow&&\downarrow\alpha&&\Vert\\
1&\longrightarrow&N&\longrightarrow&G&\longrightarrow&D&\longrightarrow&1
\end{array}
\]
Naturality of the five-term sequence gives the commutative square
\[\begin{array}{ccc}
H^1(N,\Fp)^D & \overset{\tau}{\longrightarrow} & H^2(D,\Fp)\\
\downarrow && \Vert\\
H^1(R,\Fp)^D & \overset{\operatorname{trg}_R}{\longrightarrow} & H^2(D,\Fp),
\end{array}\]
where the left vertical map is restriction along $R\longrightarrow N$. Let
\[\varphi\in H^1(N,\Fp)^D.\] Under the relation pairing, the presentation transgression is evaluated on the defining relation by
\[(\varphi\circ\alpha)(r)= \varphi(\alpha(r)) = \varphi(w). \]
Hence the functional on $V^D$ defined by the transgression is evaluation at the image of $w$.
		
Every $D$-invariant continuous functional on $W$ vanishes on $\displaystyle \overline{I_DW}$ and therefore factors through $W_D$. Under
\[(V^D)^\vee\simeq W_D,\]
the dual transgression is consequently represented by the class of $w$ in $W_D$.
\end{proof}
	
\subsection{Dimension one versus dimension two}

We can now combine the cohomological term $H^1(D,W^\vee)$ with the relation defect to obtain an exact criterion for the lower-dimensional branch.
	
\begin{theorem}\label{thm:lower}
Assume $W^D=0$. Then $\cdp G=1$ if and only if
\begin{equation}\label{eq:lowerconditions}
H^1(D,W^\vee)=0 \qquad\text{and}\qquad \delta_G\neq0.
\end{equation}
In every other case, $\cdp G=2$. 
\end{theorem}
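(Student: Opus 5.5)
The plan is to reduce everything to the vanishing of $H^2(G,\Fp)$ and then read this group off the exact sequence \eqref{eq:H2low}.

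First, the range of possible dimensions. Since $W^D=0$, Theorem~\ref{thm:equiv} gives $H^3(G,\Fp)=0$, and \eqref{eq:cdcriterion} then gives $\cdp G\le2$. Since $N\neq1$, $G$ is a nontrivial pro-$p$ group, so $H^1(G,\Fp)\neq0$ and $\cdp G\ge1$. Hence $\cdp G\in\{1,2\}$. Applying \eqref{eq:cdcriterion} with $n=1$, we get $\cdp G=1$ exactly when $H^2(G,\Fp)=0$.

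Next, I would justify \eqref{eq:H2low} carefully, since the whole argument rests on it. In total degree two the $E_2$-terms are:
\begin{itemize}
\item $E_2^{0,2}=0$, because $N$ is free;
\item $E_2^{1,1}=H^1(D,V)$;
\item $E_2^{2,0}=H^2(D,\Fp)$.
\end{itemize}
Consider first $E^{1,1}$. The differential $d_2^{1,1}$ lands in $E_2^{3,0}=0$, since $\cdp D=2$. The only possible incoming differential would come from $E_2^{-1,2}$, which is zero. So $E_\infty^{1,1}=H^1(D,V)$.

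Now consider $E^{2,0}$. The only differential hitting it is the transgression $\tau=d_2^{0,1}$, and all higher differentials vanish by the two-row support already used in Proposition~\ref{prop:topedge}. So $E_\infty^{2,0}=\coker\tau$.

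The filtration of $H^2(G,\Fp)$ therefore has graded pieces $\coker\tau$ and $H^1(D,V)$, which gives \eqref{eq:H2low}. It follows that
\[
H^2(G,\Fp)=0\iff H^1(D,V)=0\ \text{and}\ \coker\tau=0.
\]

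Finally, I translate the condition $\coker\tau=0$ into the relation defect. The target $H^2(D,\Fp)\simeq\Fp$ is one-dimensional, so $\tau$ is surjective exactly when $\tau\neq0$. A map of $\Fp$-vector spaces is nonzero if and only if its Pontryagin dual is nonzero. Using $V^D\simeq(W_D)^\vee$ from \eqref{eq:invcoinvdual}, the dual map is $\tau^\vee\colon H^2(D,\Fp)^\vee\to W_D$. Its source is one-dimensional, so $\tau^\vee\neq0$ exactly when $\delta_G=\tau^\vee(\eta^\vee)\neq0$; this does not depend on the choice of $\eta^\vee$, as noted in Definition~\ref{def:defect}. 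Combining the three steps: $\cdp G=1$ if and only if $H^1(D,W^\vee)=0$ and $\delta_G\neq0$, and otherwise $\cdp G=2$.

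The main point needing care is the $E_\infty$ bookkeeping in the second step, specifically confirming that no differential other than $\tau$ touches $E^{2,0}$ or $E^{1,1}$. This follows from the vanishing of row $2$ and of column $3$. Lemma~\ref{lem:relationdefect} is not needed for the proof itself; it only makes the criterion computable through $\delta_G=\overline{w}$.
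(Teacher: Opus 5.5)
Your proposal is correct and follows the paper's proof essentially step for step. You use $W^D=0$ to get $\cdp G\le 2$, reduce $\cdp G=1$ to the vanishing of $H^2(G,\Fp)$ via \eqref{eq:H2low}, and convert $\coker\tau=0$ into $\delta_G\neq 0$ using the one-dimensionality of $H^2(D,\Fp)$ and duality; the only addition is your explicit $E_\infty$ verification of \eqref{eq:H2low}, which the paper states without checking the differentials in detail.
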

	
\begin{proof}
Since $W^D=0$, Theorem~\ref{thm:equiv} gives $\cdp G\le2$. From \eqref{eq:H2low}, $H^2(G,\Fp)=0$ if and only if $H^1(D,V)=0$ and $\coker\tau=0$. The target of
\[\tau:V^D\longrightarrow H^2(D,\Fp)\]
is one-dimensional. Therefore
\[\coker\tau=0 \iff \tau\neq0 \iff \tau^\vee\neq0 \iff \delta_G\neq0.\]
Since $V=W^\vee$, we obtain
\[H^2(G,\Fp)=0 \text{ if and only if } H^1(D,W^\vee)=0\text{ and } \delta_G\neq0.\]
		
A nontrivial pro-$p$ group has cohomological dimension one precisely when $H^2(G,\Fp)=0$. If $H^2(G,\Fp)\neq0$, the inequality $\cdp G\le2$ gives $\cdp G=2$.
\end{proof}
	
The vanishing of $H^1(D,W^\vee)$ can in turn be guaranteed by a standard
module-theoretic hypothesis.
	
\begin{proposition}\label{prop:projectsuff}
Let $\Lambda=\Fp\llbracket D\rrbracket$. If $W$ is projective as a pseudocompact left $\Lambda$-module, then
\[H^1(D,W^\vee)=0.\]
Consequently, if
\[W^D=0, \qquad W\text{ is projective}, \qquad \delta_G\neq0, \]
then $\cdp G=1$. 
\end{proposition}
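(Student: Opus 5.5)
The plan is to express $H^1(D,W^\vee)$ through Pontryagin duality as the dual of a continuous homology group of $D$ with coefficients in the compact module $W$. Projectivity of $W$ then kills that homology group. The second claim follows by feeding the vanishing into Theorem~\ref{thm:lower}.

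\textbf{Step 1.} For a compact $\Lambda$-module $X$, with $\Lambda=\Fp\llbracket D\rrbracket$, we have the standard duality between discrete cohomology and pseudocompact homology:
\[
H^i(D,X^\vee)\simeq \operatorname{Ext}^i_{\Lambda}(\Fp,X^\vee)\simeq \Tor_i^{\Lambda}(\Fp,X)^\vee .
\]
This is the homological counterpart of \eqref{eq:invcoinvdual}, which is its case $i=0$; see Brumer \cite{Brumer}. To justify it concretely:
\begin{itemize}[label=\textbullet]
\item Take a projective resolution $P_\bullet\to\Fp$ of the trivial module by finitely generated free pseudocompact $\Lambda$-modules. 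Such a resolution exists because $D$ is finitely generated and finitely presented.
\item Observe that $\operatorname{Hom}_\Lambda(P_i,X^\vee)\simeq (P_i\widehat\otimes_\Lambda X)^\vee$ by adjunction between completed tensor product and Pontryagin duality.
\item Use that duality is exact on compact/discrete $\Fp$-modules, so it commutes with taking homology of the complex.
\end{itemize}
Setting $i=1$ gives $H^1(D,W^\vee)\simeq \Tor_1^\Lambda(\Fp,W)^\vee$.

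\textbf{Step 2.} If $W$ is projective as a pseudocompact left $\Lambda$-module, then $\Tor_1^\Lambda(\Fp,W)=0$. The reason is that $W$ is a direct summand of a free pseudocompact module $\prod$-type $\Lambda\widehat\otimes_{\Fp}\Fp^{I}$, and $\Tor$ computed in the second variable is additive and vanishes on free modules. Equivalently, one may compute $\Tor$ using the trivial resolution $W\to W$ of the second argument. Dualizing then gives $H^1(D,W^\vee)=0$.

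Alternatively, one could argue dually and avoid $\Tor$ altogether. If $W$ is projective, then $W^\vee$ is an injective discrete $\Lambda$-module, since duality exchanges projective pseudocompact modules and injective discrete modules. Injective discrete modules are cohomologically acyclic, so $H^1(D,W^\vee)=0$ directly. I would present this route as the main argument, because it uses only the duality of module categories from \cite{Brumer} and the fact that $H^i(D,-)=\operatorname{Ext}^i_\Lambda(\Fp,-)$ on discrete modules.

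\textbf{Step 3.} Under the hypotheses $W^D=0$ and $\delta_G\neq0$, together with the vanishing just proved, both conditions in \eqref{eq:lowerconditions} hold. Theorem~\ref{thm:lower} then gives $\cdp G=1$.

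The main obstacle is Step 1 (or its injective variant): we need the categorical anti-equivalence between pseudocompact and discrete $\Lambda$-modules to send projectives to injectives, and we need the identification of continuous cohomology with $\operatorname{Ext}$ in the discrete category. Both are standard in Brumer's formalism. I would cite them rather than reprove them, and check only that the left/right module conventions match under duality.
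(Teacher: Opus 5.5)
Your proposal is correct, and Steps~1--3 are exactly the paper's argument. The paper dualizes $H^1(D,W^\vee)$ to $H_1(D,W)\simeq\Tor_1^\Lambda(\Fp,W)$, notes that this vanishes for projective $W$, and then applies Theorem~\ref{thm:lower}; you additionally spell out the duality through a free resolution of $\Fp$.

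The injective variant you would make primary is also valid. It is the same fact viewed through Brumer's anti-equivalence between pseudocompact and discrete modules.
\begin{itemize}
\item \emph{Gain:} you never need to set up, or balance, $\Tor$ in the pseudocompact category.
\item \emph{Cost:} you must know that $H^i(D,-)$, restricted to discrete $\Fp[D]$-modules, is the derived functor of invariants computed inside that subcategory. Only then are injectives of that subcategory (such as $W^\vee$) cohomologically acyclic.
\end{itemize}
That fact holds: the coinduced modules $\operatorname{Map}_{\mathrm{cont}}(D,A)$, with $A$ an $\Fp$-vector space, lie in the subcategory and have vanishing higher cohomology. Equivalently, the bar resolution resolves $\Fp$ by projective pseudocompact $\Lambda$-modules. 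With that fact in place, and the involution $d\mapsto d^{-1}$ turning the dual right module into the contragredient left module, either of your routes is complete.
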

	
\begin{proof}
Pontryagin duality between continuous cohomology and profinite homology gives
\[H^1(D,W^\vee)^\vee \simeq H_1(D,W). \]
In the pseudocompact module category,
\[H_1(D,W) \simeq \Tor_1^\Lambda(\Fp,W). \]
Projectivity of $W$ makes this Tor group vanish, and hence $H^1(D,W^\vee)=0$. The final assertion follows from Theorem~\ref{thm:lower}.
	\end{proof}

Taken together, the top- and lower-degree analyses give a two-stage
description of the cohomological dimension of the extension. The two branches are therefore described by
\[W^D\neq0 \quad\Longleftrightarrow\quad \cdp G=3, \]
while on $W^D=0$ the pair
\[H^1(D,W^\vee), \qquad \delta_G\in W_D \]
distinguishes dimensions one and two.

\section{Examples}\label{sec:examples}
	
\begin{example}[Direct products]
Let $N\neq1$ be free pro-$p$ of arbitrary rank and put
\[G=N\times D.\]
The $D$-action on
\[W=N/\Phi(N)\]
is trivial. Hence $W^D=W\neq0$ and $\cdp G=3$. Moreover,
\[\kappa(G,N)=\tdim_{\Fp}W=\dim_{\Fp}H^1(N,\Fp).\]
\end{example}
	
\begin{example}[Finite-rank nontrivial actions]
Let $N$ be free pro-$p$ of finite positive rank and let $D$ act continuously on $N$. For the corresponding semidirect product 
\[G=N\rtimes D,\]
Lemma~\ref{lem:fixedfinite} gives
\[(N/\Phi(N))^D\neq0.\]
Thus $\cdp G=3$ for every such action.
\end{example}
	
\begin{example}[The Kochloukova--Zalesskii phenomenon]
Kochloukova--Zalesskii construct a finitely generated free-by-Demu\v{s}kin pro-$p$ group with infinite-rank free kernel and $\cdp G=2$ \cite{KZ}. Theorem~\ref{thm:equiv} gives
\[(N/\Phi(N))^D=0.\]
Thus their example lies on the fixed-point-free locus of the first Frattini module, and the theorem identifies the first-layer mechanism responsible for the loss of the top cohomological degree. 
\end{example}

\begin{example}[A free middle group]\label{ex:freemiddle}
Let
\[1\longrightarrow N \longrightarrow F \longrightarrow D \longrightarrow1 \]
be a minimal one-relator presentation of the Demu\v{s}kin group $D$, with $F$ free pro-$p$ and
\[N=\overline{\langle\!\langle r\rangle\!\rangle}.\]
Since $\cdp F=1$, Theorem~\ref{thm:equiv} gives $(N/\Phi(N))^D=0$. Theorem~\ref{thm:lower} then gives $	H^1\bigl(D,(N/\Phi(N))^\vee\bigr)=0$ and $\delta_F\neq0$. By Lemma~\ref{lem:relationdefect}, $\delta_F$ is represented in the topological coinvariants by the defining relation $r$.
\end{example}
	
\section{Relation to existing results}\label{sec:scope}
	
Schmidt--Wingberg's extension theorem \cite{SchmidtWingberg} places the finite-rank free-by-Demu\v{s}kin case within profinite duality theory. For a finitely generated free kernel, their finiteness condition holds and the middle group is a duality group of dimension three. Theorem~\ref{thm:equiv} gives the corresponding dimension statement directly from the first Frattini quotient and extends the same fixed-point detector to arbitrary-rank kernels.
	
Kochloukova--Zalesskii \cite{KZ} exhibit the complementary infinite-rank phenomenon: a finitely generated middle group of cohomological dimension two can have an infinite-rank free kernel and a Demu\v{s}kin quotient. The fixed-Frattini criterion identifies the associated obstruction as
\[(N/\Phi(N))^D=0.\]
Together with Example~\ref{ex:infinitecd3}, this shows that arbitrary rank permits both sides of the criterion and that the first-layer action, rather than rank alone, distinguishes them.
	
For $PD^3$ middle groups, normal-subgroup theory imposes stronger restrictions. The theorem of Grunewald--Jaikin-Zapirain--Pinto--Zalesskii \cite{GJPPZ} gives the rank-one conclusion in Proposition~\ref{prop:PDnecessary}, while Castellano--Zalesskii \cite{CastellanoZalesskii} obtain further structural results for subgroups of pro-$p$ $PD^3$ groups.
	
The same top-degree mechanism extends from Demu\v{s}kin quotients to pro-$p$ Poincar\'e-duality quotients of arbitrary dimension. 

\begin{proposition}\label{prop:PDn}
Let
\[1\longrightarrow N\longrightarrow G\longrightarrow Q\longrightarrow1\]
be exact, where $N\neq1$ is free pro-$p$ and $Q$ is a pro-$p$ Poincar\'e-duality group of dimension $n$. Then
\[H^{n+1}(G,\Fp)^\vee \simeq \bigl(N/\Phi(N)\bigr)^Q. \]
Consequently, $\cdp G=n+1$ if and only if $\bigl(N/\Phi(N)\bigr)^Q\neq0$. \end{proposition}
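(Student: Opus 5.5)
The plan is to repeat the argument of Proposition~\ref{prop:topedge} and Theorem~\ref{thm:topfixed} with $D$ replaced by $Q$. Set $W=N/\Phi(N)$ and $V=H^1(N,\Fp)=W^\vee$. Consider the Hochschild--Serre spectral sequence
\[E_2^{i,j}=H^i\bigl(Q,H^j(N,\Fp)\bigr)\Longrightarrow H^{i+j}(G,\Fp).\]
Since $N$ is free, only the rows $j=0,1$ can be nonzero. Since $\cdp Q=n$, only the columns $0\le i\le n$ can be nonzero. So the unique term in total degree $n+1$ is $E_2^{n,1}=H^n(Q,V)$.

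The first step is to show this term survives to $E_\infty$. A differential entering it would have source $E_r^{n-r,r}$ with $r\ge2$, which lies in a vanishing row. A differential leaving it has target $E_r^{n+r,2-r}$. For $r=2$ this lies in column $n+2>n$, and for $r>2$ it lies in a negative row. Hence $E_\infty^{n,1}=E_2^{n,1}$, and
\[H^{n+1}(G,\Fp)\simeq H^n(Q,V).\]
The same support count gives $H^i(G,\Fp)=0$ for $i>n+1$. Combined with the extension inequality \eqref{eq:upperbound} and \eqref{eq:cdcriterion}, this shows that $\cdp G\le n+1$, with equality if and only if $H^{n+1}(G,\Fp)\neq0$.

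The second step generalizes Lemma~\ref{lem:topdual}: for a discrete $\Fp[Q]$-module $M$,
\[H^n(Q,M)^\vee\simeq\operatorname{Hom}_Q(M,\Fp).\]
For finite $M$, this is top-degree Poincaré duality: $H^n(Q,M)^\vee\simeq\operatorname{Hom}_Q(M,I(Q))$, where the dualizing module is $I(Q)\simeq\mathbb Q_p/\Zp$ twisted by an orientation character $\chi\colon Q\to\Zp^\times$. Since $M$ is $p$-torsion, any homomorphism lands in $I(Q)[p]\simeq\Fp$, on which $Q$ acts through $\chi$ mod $p$. That is a continuous homomorphism from a pro-$p$ group to $\Fp^\times$, which has order prime to $p$, so it is trivial. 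For general discrete $M$, write $M$ as the filtered union of its finite submodules. Cohomology commutes with this direct limit, and dualizing turns it into an inverse limit, exactly as in Lemma~\ref{lem:topdual}. Taking $M=V=W^\vee$, biduality gives $\operatorname{Hom}_Q(W^\vee,\Fp)\simeq W^Q$, so $H^{n+1}(G,\Fp)^\vee\simeq W^Q$. Finally, $H^{n+1}(G,\Fp)\neq0$ exactly when its dual is nonzero, which gives the criterion.

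The only step needing real care is the duality input. One must check that the definition of a pro-$p$ $PD^n$ group used supplies the finite-module duality $H^n(Q,M)^\vee\simeq\operatorname{Hom}_Q(M,I(Q))$ with $I(Q)[p]$ one-dimensional. This should follow from the standard characterization in \cite[Chapter~III]{NSW}: a duality group at $p$ whose dualizing module is isomorphic to $\mathbb Q_p/\Zp$. The limit argument and the spectral-sequence bookkeeping are then routine.
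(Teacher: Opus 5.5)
Your proposal is correct and follows the paper's proof essentially step for step. You use the same support argument to isolate $E_2^{n,1}=H^n(Q,V)$ with no incoming or outgoing differentials, then top-degree duality $H^n(Q,W^\vee)^\vee\simeq\operatorname{Hom}_Q(W^\vee,I(Q)[p])$, triviality of the mod-$p$ orientation action, the direct-limit passage from finite to arbitrary discrete coefficients as in Lemma~\ref{lem:topdual}, and biduality to reach $W^Q$, with the differential bookkeeping spelled out where the paper simply refers back to Proposition~\ref{prop:topedge}.
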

	
\begin{proof}
Since $\cdp N=1$ and $\cdp Q=n$, we have that $\cdp G\le n+1$. In the Hochschild--Serre spectral sequence, the only possibly nonzero term of total degree $n+1$ is 
\[E_2^{n,1}=H^n\bigl(Q,H^1(N,\Fp)\bigr).\]
As in Proposition~\ref{prop:topedge}, no differential can enter or leave this term. Hence
\[H^{n+1}(G,\Fp)\simeq H^n\bigl(Q,H^1(N,\Fp)\bigr).\]
		
Put \[W=N/\Phi(N).\]
Top-degree $PD^n$ duality gives
\[H^n(Q,W^\vee)^\vee \simeq \operatorname{Hom}_Q(W^\vee,I(Q)[p]),\]
where $I(Q)$ is the dualizing module. The same passage from finite to arbitrary discrete coefficients used in Lemma~\ref{lem:topdual} applies here, using \cite[Proposition~1.5.1]{NSW}.
		
The mod-$p$ orientation action is trivial because $Q$ is pro-$p$, so
\[I(Q)[p]\simeq\Fp\]
as a $Q$-module. Therefore
\[H^{n+1}(G,\Fp)^\vee \simeq \operatorname{Hom}_Q(W^\vee,\Fp) \simeq W^Q.\]
The cohomological-dimension criterion completes the proof.
	\end{proof}
	
For $n=2$, the quotient has one-dimensional top cohomology and the two-row spectral sequence also yields the relation-defect description of the lower-dimensional branch developed in Section~\ref{sec:complement}.
	
\section{Further questions}

The fixed-Frattini formulation leaves several natural realizability problems. The preceding results separate the module-theoretic datum $W=N/\Phi(N)$ from the additional extension data recorded by the relation defect.

If the extension
\[1\longrightarrow N\longrightarrow G\longrightarrow D\longrightarrow1\]
splits, then $\delta_G=0$. Indeed, writing $G=N\rtimes D$, every element of $H^1(N,\Fp)^D$ extends to a homomorphism $G\longrightarrow\Fp$ by setting it equal to zero on $D$. Hence the restriction map
\[H^1(G,\Fp)\longrightarrow H^1(N,\Fp)^D\]
is surjective. By the Hochschild--Serre five-term sequence, the transgression
\[\tau:H^1(N,\Fp)^D\longrightarrow H^2(D,\Fp)\]
is therefore zero, and thus $\delta_G=0$.

\begin{question}
Let $D$ be a fixed Demu\v{s}kin group. Which compact $\Fp\llbracket D\rrbracket$-modules $W$ with $W^D=0$ occur as $W\simeq N/\Phi(N)$ for an infinite-rank free normal subgroup $N$ in a free-by-Demu\v{s}kin extension? Equivalently, which such $D$-actions lift to outer actions on free pro-$p$ groups, and in the split case to homomorphisms
\[D\longrightarrow\operatorname{Aut}(N)?\]
	
For a realizable $W$, which relation defects can occur? In particular, when can $\delta_G$ vanish, which classes $[\delta_G]\in\mathbb P(W_D)$ can arise for a fixed $D$-module structure on $W$, and how is this related to $H^1(D,W^\vee)$?
\end{question}

\begin{question}
Suppose that $G$ occurs as a maximal pro-$p$ Galois group, or as a naturally occurring closed subgroup of one. What restrictions does Galois theory impose on the $D$-module $N/\Phi(N)$, including its fixed submodule and relation defect, and can these restrictions rule out particular free-by-Demu\v{s}kin extensions?
\end{question}

\section*{Acknowledgments} 

Theorem \ref{thm:kappa-realizability} appeared in an earlier version of this manuscript as Question 10.1. We are grateful to Henrique Souza, who privately communicated an argument essentially resolving the question and forming the basis of the theorem and proof presented here.


\begin{thebibliography}{99}
			
			\bibitem{Brumer}
			A. Brumer,
			\textit{Pseudocompact algebras, profinite groups and class formations},
			J. Algebra \textbf{4} (1966), no.~3, 442--470.
			\href{https://doi.org/10.1016/0021-8693(66)90034-2}
			{doi:10.1016/0021-8693(66)90034-2}.
			
			\bibitem{CastellanoZalesskii}
			I. Castellano and P. Zalesskii,
			\textit{Subgroups of pro-$p$ $PD^3$-groups},
			Monatsh. Math. \textbf{195} (2021), no.~3, 391--400.
			\href{https://doi.org/10.1007/s00605-020-01505-5}
			{doi:10.1007/s00605-020-01505-5}.
			
			\bibitem{GJPPZ}
			F. Grunewald, A. Jaikin-Zapirain, A. G. S. Pinto, and P. A. Zalesskii,
			\textit{Normal subgroups of profinite groups of non-negative deficiency},
			J. Pure Appl. Algebra \textbf{218} (2014), no.~5, 804--828.
			\href{https://doi.org/10.1016/j.jpaa.2013.10.003}
			{doi:10.1016/j.jpaa.2013.10.003}.
			
			\bibitem{KZ}
			D. H. Kochloukova and P. Zalesskii,
			\textit{Free-by-Demushkin pro-$p$ groups},
			Math. Z. \textbf{249} (2005), no.~4, 731--739.
			\href{https://doi.org/10.1007/s00209-004-0720-6}
			{doi:10.1007/s00209-004-0720-6}.
			
			\bibitem{Labute}
			J. P. Labute,
			\textit{Classification of Demushkin groups},
			Canad. J. Math. \textbf{19} (1967), no.~1, 106--132.
			\href{https://doi.org/10.4153/CJM-1967-007-8}
			{doi:10.4153/CJM-1967-007-8}.
			
			\bibitem{NSW}
			J. Neukirch, A. Schmidt, and K. Wingberg,
			\textit{Cohomology of Number Fields},
			2nd ed.,
			Grundlehren der mathematischen Wissenschaften, vol.~323,
			Springer, Berlin, 2008.
			
			\bibitem{RZ}
			L. Ribes and P. Zalesskii,
			\textit{Profinite Groups},
			2nd ed.,
			Ergebnisse der Mathematik und ihrer Grenzgebiete,
			3. Folge, vol.~40,
			Springer, Berlin--Heidelberg, 2010.
			
			\bibitem{SchmidtWingberg}
			A. Schmidt and K. Wingberg,
			\textit{Extensions of profinite duality groups},
			preprint, 2008,
			arXiv:0809.4371.
			
			\bibitem{Serre}
			J.-P. Serre,
			\textit{Galois Cohomology},
			corrected 2nd printing,
			Springer Monographs in Mathematics,
			Springer, Berlin, 2002.
			
			\bibitem{WingbergPD}
			K. Wingberg,
			\textit{On Poincar\'e groups},
			J. London Math. Soc. (2) \textbf{33} (1986), no.~2, 271--278.
			\href{https://doi.org/10.1112/jlms/s2-33.2.271}
			{doi:10.1112/jlms/s2-33.2.271}.
			
		\end{thebibliography}
\end{document}